\newtheorem{theorem}{Theorem}[section]
\newtheorem{lemma}[theorem]{Lemma}
\newtheorem{proposition}[theorem]{Proposition}
\numberwithin{equation}{section}
\theoremstyle{definition}
\newtheorem{remark}[theorem]{Remark}
\begin{document}
\title[Stable sets of certain non-uniformly hyperbolic horseshoes]{Stable sets of certain non-uniformly hyperbolic horseshoes have the expected dimension}
\author{Carlos Matheus}
\address{Carlos Matheus: Universit\'e Paris 13, Sorbonne Paris Cit\'e, LAGA, CNRS (UMR 7539), F-93439, Villetaneuse, France}
\email{matheus@impa.br.}
\author{Jacob Palis}
\address{Jacob Palis: IMPA, Estrada D. Castorina, 110, CEP 22460-320, Rio de Janeiro, RJ, Brazil}
\email{jpalis@impa.br.}
\author{Jean-Christophe Yoccoz}
\address{Jean-Christophe Yoccoz: Coll\`ege de France, 3, Rue d'Ulm, Paris, CEDEX 05, France}
\email{jean-c.yoccoz@college-de-france.fr.}
\date{\today}
\begin{abstract}
We show that the stable and unstable sets of non-uniformly hyperbolic horseshoes arising in some heteroclinic bifurcations of surface diffeomorphisms have the value conjectured in a previous work by the second and third authors of the present paper. Our results apply to first heteroclinic bifurcations associated to horseshoes with Hausdorff dimension $<22/21$ of conservative surface diffeomorphisms. 
\end{abstract}
\maketitle

\tableofcontents

\section{Introduction}\label{intro}

In 2009, the second and third authors of the present paper proved in \cite{PY09} that the semi-local dynamics of first heteroclinic bifurcations associated to ``slightly thick'' horseshoes of surface diffeomorphisms usually can be described by the so-called \emph{non-uniformly hyperbolic horseshoes}. 

In this article, we pursue the studies of Palis--Yoccoz \cite{PY09} and Matheus--Palis \cite{MP} of the Hausdorff dimensions of the stable and unstable sets of non-uniformly hyperbolic horseshoes. 

In order to state our main result (Theorem \ref{t.MPY-A} below), we need first to recall the setting of Palis--Yoccoz work \cite{PY09}.   

\subsection{Heteroclinic bifurcations in Palis--Yoccoz regime}\label{ss.PYsetting-intro} Fix a smooth  diffeomorphism $g_0:M\to M$ of a compact surface $M$. Assume that $p_s$ and $p_u$ are periodic points of $g_0$ in \emph{distinct} orbits such that $W^s(p_s)$ and $W^u(p_u)$ meet \emph{tangentially} and \emph{quadratically} at some point $q$. Suppose that $K$ is a horseshoe of $g_0$ such that $p_s, p_u\in K$ and $q\in M\setminus K$, and, for some neighborhoods\footnote{It is shown in Appendix \ref{a.MMP} below that it is often the case that the particular choices of $U$ and $V$ are not very relevant.} $U$ of $K$ and $V$ of the orbit $\mathcal{O}(q)$, the maximal invariant set of $U\cup V$ is $K\cup\mathcal{O}(q)$.  In summary, $g_0$ has a first heteroclinic tangency at $q$ associated to periodic points $p_s, p_u$ of a horseshoe $K$.  

Let  $(g_t)_{|t|<t_0}$ be a $1$-parameter family of smooth diffeomorphisms of $M$ \emph{generically} unfolding the first heteroclinic tangency of $g_0$ described in the previous paragraph. Assume that the continuations of $W^s(p_s)$ and $W^u(p_u)$ have no intersection near $q$ for $-t_0<t<0$ and two transverse intersections near $q$ for $0<t<t_0$. 

Denote by $K_{g_t}:=\bigcap\limits_{n\in\mathbb{Z}} g_t^{-n}(U)$ hyperbolic continuation of $K$. In our context, it is not hard to describe the maximal invariant set 
\begin{equation}\label{e.Lambda-gt}
\Lambda_{g_t}:= \bigcap\limits_{n\in\mathbb{Z}} g_t^{-n}(U\cup V)
\end{equation}
in terms of $K_{g_t}$ \emph{when} $-t_0<t\leq 0$: indeed, $\Lambda_{g_t} = K_{g_t}$ when $-t_0<t<0$, and $\Lambda_{g_0}=K\cup\mathcal{O}(q)$. 

On the other hand, the study of $\Lambda_{g_t}$ for $0<t<t_0$ represents an important challenge when the Hausdorff dimension of the initial horseshoe $K=K_{g_0}$ is larger than one. 

In their paper \cite{PY09}, Palis and Yoccoz studied \emph{strongly regular parameters} $0< t < t_0$ whenever $K_{g_0}$ is \emph{slightly thick}, i.e.,  
\begin{equation}\label{e.PYset}
(d_s^0+d_u^0)^2+(\max\{d_s^0, d_u^0\})^2< (d_s^0+d_u^0) + \max\{d_s^0, d_u^0\}
\end{equation} 
where $d_s^0$ and $d_u^0$ (resp.) are the transverse Hausdorff dimensions of the invariant sets $W^s(K_{g_0})$ and $W^u(K_{g_0})$ (resp.). In this setting, Palis and Yoccoz proved that any strongly regular parameter $t$ has the property that $\Lambda_{g_t}$ is a \emph{non-uniformly hyperbolic horseshoe}, and, moreover, the strongly regular parameters are abundant near $t=0$: 
$$\lim\limits_{\varepsilon\to0^+}\frac{1}{\varepsilon}\textrm{Leb}_1(\{0<t< \varepsilon: t \textrm{ is a strongly regular parameter}\})=1$$ 
(Here $\textrm{Leb}_1$ is the $1$-dimensional Lebesgue measure.) 

\begin{remark} This result of Palis and Yoccoz is a semi-local dynamical result: indeed, Appendix \ref{a.MMP} below (by the C. G. Moreira and the first two authors of this paper) shows that it is often the case that $U\cup V$ can be chosen of almost full Lebesgue measure. 
\end{remark}

We refer the reader to the original paper \cite{PY09} for the precise definitions of \emph{strongly regular parameters} and \emph{non-uniformly hyperbolic horseshoes}. For the purposes of this article, we will discuss \emph{some} features of non-uniformly hyperbolic horseshoes in Section \ref{s.preliminaries} below. 

For the time being, we recall only that non-uniformly hyperbolic horseshoes are \emph{saddle-like} sets:
\begin{theorem}[cf. Theorem 6 in \cite{PY09} and Theorem 1.2 in \cite{MP}]\label{t.PY09-Thm6} Under the previous assumptions, if $t$ is a strongly regular parameter, then 
$$\textrm{HD}(W^s(\Lambda_{g_t}))<2 \quad \textrm{ and } \quad \textrm{HD}(W^u(\Lambda_{g_t}))<2$$ 
where $\textrm{HD}$ stands for the Hausdorff dimension. In particular, $\Lambda_{g_t}$ does not contain attractors nor repellors.
\end{theorem}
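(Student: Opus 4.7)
The plan is to bound $\textrm{HD}(W^s(\Lambda_{g_t}))$ by decomposing $\Lambda_{g_t}$ into a countable family of pieces and estimating each contribution separately; countable stability of Hausdorff dimension then yields the overall bound, and the estimate for $W^u(\Lambda_{g_t})$ follows by applying the same argument to $g_t^{-1}$. The absence of attractors and repellors is immediate from the two dimension bounds, since an attractor of a surface diffeomorphism necessarily has stable set of full Hausdorff dimension $2$ in a neighbourhood.

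The natural first piece is the hyperbolic continuation $K_{g_t}$ of the original horseshoe, and the rest, $\Lambda_{g_t}\setminus K_{g_t}$, consists of orbits that visit the tangency neighbourhood $V$ at least once. For the uniformly hyperbolic piece I would invoke standard dimension theory for $C^{1+\alpha}$ horseshoes in dimension two, giving $\textrm{HD}(W^s(K_{g_t})) = 1 + d_u(K_{g_t})$. Continuous dependence of the transverse dimensions on the parameter, combined with the slight-thickness condition \eqref{e.PYset} (which in particular forces $d_u^0<1$), keeps this contribution strictly below $2$ for all small $t$.

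For the remaining part I would appeal to the Palis--Yoccoz hierarchical description of a non-uniformly hyperbolic horseshoe as a countable family of ``affine-like pieces with parabolic tips'', indexed by itineraries that record the consecutive passages of an orbit through $V$. On each such piece, the local stable lamination consists of smooth arcs transverse to a Cantor set built by iterating affine-like maps with a quadratic fold at every parabolic visit; a Moran-type dimension formula, adapted to incorporate these folds, bounds the transverse dimension in terms of $d_s^0$, $d_u^0$, and auxiliary quantities coming from the return geometry. Strong regularity of $t$ supplies the uniform distortion and expansion estimates needed to apply this formula at every scale.

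The main obstacle will be the quantitative control of how the quadratic folds affect the transverse dimension across the hierarchy: a priori, a fold superimposes two images of a Cantor set and threatens to drive the transverse dimension up to $1$, which would only give the trivial bound $\leq 2$. The inequality \eqref{e.PYset} is precisely the algebraic condition ensuring that, after accounting for a single fold, the combination of $d_s^0+d_u^0$ and $\max\{d_s^0, d_u^0\}$ leaves enough room to preserve a uniform strict bound below $1$ on the transverse dimensions; this bound then propagates through the hierarchy by induction on the itinerary length. This bookkeeping is the technically heavy portion of the arguments in \cite{PY09} and \cite{MP}, and the step where the full strength of both the slight-thickness condition and the strongly regular parameter hypothesis is used.
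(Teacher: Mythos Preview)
This theorem is not proved in the present paper; it is quoted from \cite{PY09} and \cite{MP}. Nonetheless, the paper summarizes the mechanism (see the outline in Subsection~1.3 and the discussion of $\mathcal{E}^+$ in Section~\ref{s.preliminaries}), and your sketch departs from it in two substantive ways.

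First, a slip: $\textrm{HD}(W^s(K_{g_t})) = 1 + d_s(K_{g_t})$, not $1 + d_u$. The transverse Cantor set of the stable lamination has dimension $d_s$.

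Second, and more importantly, the decomposition you propose---split $\Lambda$ into $K_{g_t}$ and $\Lambda_{g_t}\setminus K_{g_t}$, then handle $W^s$ of each piece---is not the one used. The actual decomposition is of $W^s(\Lambda,\widehat{R})\cap R$ into a \emph{well-behaved part} (preimages of the lamination $\widetilde{\mathcal{R}}^{\infty}_+$ of stable curves, which is far larger than $W^s(K_{g_t})$ since it incorporates all stable curves built from the class $\mathcal{R}$ of affine-like iterates, including parabolic compositions) and a residual \emph{exceptional set} $\mathcal{E}^+$ of points whose forward orbit never enters $\widetilde{\mathcal{R}}^{\infty}_+$. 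The well-behaved part has dimension exactly $1+d_s$; the whole difficulty is bounding $\textrm{HD}(\mathcal{E}^+)$.

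Your description of that difficulty---a Moran-type formula with folds, and an inductive bound on transverse dimension driven by condition \eqref{e.PYset}---does not match the actual argument. Points of $\mathcal{E}^+$ are trapped in a nested sequence of strips $(P_j)$ whose widths decay \emph{doubly exponentially}: $\max\{|P_j|,|Q_j|\}\leq \varepsilon_0^{\widehat{\beta}^j}$ for some $\widehat{\beta}>1$ (Lemma~24 of \cite{PY09}, recalled here as \eqref{e.Lemma24'}). The role of \eqref{e.PYset} is precisely to guarantee that such a $\beta>1$ exists (see \eqref{e.beta-def}); it is not used as a fold-by-fold transverse-dimension bound. The proof in \cite{MP} then covers $\mathcal{E}^+$ by pulling back a decomposition of $P_k$ into squares through the alternation of affine-like iterates and the folding map $G$, replacing each parabolic image crudely by a ``fat strip''; the double exponential decay is strong enough that this crude replacement still yields $\textrm{HD}(\mathcal{E}^+)<2$. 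Your inductive Moran picture would not organize this covering argument, and in particular does not isolate the double exponential decay, which is the genuine engine of the proof.
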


As it turns out, this result leaves open the \emph{exact} calculation of the quantitites $\textrm{HD}(W^s(\Lambda_{g_t}))$: in fact, Palis and Yoccoz conjectured in \cite[p. 14]{PY09} that the stable sets of non-uniformly hyperbolic horseshoes have Hausdorff dimensions very close or perhaps equal to the \emph{expected} dimension $1+d_s$, where $d_s$ is a certain number close to $d_s^0$ measuring the transverse dimension of the stable set of the ``main non-uniformly hyperbolic part'' of 
$\Lambda_{g_t}$.

\begin{remark}\label{r.MP} The proof of Theorem \ref{t.PY09-Thm6} \emph{never} allows to show that $W^s(\Lambda_{g_t})$ has the expected dimension: see  Remark 8 of \cite{MP}.  
\end{remark}

In this article, we give the following (partial) answer to this conjecture.

\subsection{Statement of the main theorem} We show that the conjecture stated above is true \emph{at least} when the transverse dimensions $d_s^0$ and $d_u^0$ of the stable and unstable sets of the initial horseshoe $K_{g_0}$ satisfy a \emph{stronger} constraint than \eqref{e.PYset} above.

\begin{theorem}\label{t.MPY-A} In the same setting of Theorem \ref{t.PY09-Thm6}, denote by 
$$\beta^*(d_s^0, d_u^0):= \frac{(1-\min\{d_s^0, d_u^0\})(d_s^0 + d_u^0)}{\max\{d_s^0, d_u^0\} (\max\{d_s^0,d_u^0\} + d_s^0 + d_u^0 -1)}$$

In addition to \eqref{e.PYset} (i.e., $\beta^*(d_s^0,d_u^0)>1$), let us also assume that  the transverse dimensions $d_s^0$ and $d_u^0$ of the stable and unstable sets of the initial horseshoe $K_{g_0}$ satisfy 
\begin{equation}\label{e.Dset'}
\beta^*(d_s^0, d_u^0)\leq 1+\min\left\{\frac{-\log|\lambda(p_s)|}{\log|\mu(p_s)|}, \frac{\log|\mu(p_u)|}{-\log|\lambda(p_u)|}\right\}
\end{equation}
where $\lambda(p_{\alpha})$, resp. $\mu(p_{\alpha})$, is the stable, resp. unstable, eigenvalue of the periodic point $p_{\alpha}$ for $\alpha=s, u$, 
and
\begin{equation}\label{e.Dset}
\beta^*(d_s^0, d_u^0)> \frac{5}{3} 
\end{equation}

Then, for any strongly regular parameter $t$, one has 
$$\textrm{HD}(W^s(\Lambda_{g_t}))=1+d_s$$
where $0<d_s=d_s(g_t)<1$ is a certain quantity close to $d_s^0$ given by the transverse dimensions of the lamination $\widetilde{\mathcal{R}}^{\infty}_{+}$ of stable curves associated to the well-behaved parts of $\Lambda_{g_t}$ (see pages 12, 13 and 14 of \cite{PY09}).
\end{theorem}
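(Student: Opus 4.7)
The plan is to prove the equality $\textrm{HD}(W^s(\Lambda_{g_t})) = 1+d_s$ by establishing the two inequalities separately. The lower bound $\textrm{HD}(W^s(\Lambda_{g_t})) \geq 1+d_s$ is essentially formal: the lamination $\widetilde{\mathcal{R}}^{\infty}_{+}$ consists of smooth stable curves transversely parameterized by a Cantor set of Hausdorff dimension $d_s$, so standard product-like estimates for the Hausdorff dimension of $C^1$ laminations give $\textrm{HD}(\widetilde{\mathcal{R}}^{\infty}_{+}) = 1+d_s$, and $\widetilde{\mathcal{R}}^{\infty}_{+} \subset W^s(\Lambda_{g_t})$ by construction.

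For the upper bound $\textrm{HD}(W^s(\Lambda_{g_t})) \leq 1+d_s$, I would follow the combinatorial architecture of \cite{PY09} and decompose $W^s(\Lambda_{g_t})$ as the union of $\widetilde{\mathcal{R}}^{\infty}_{+}$ together with a countable family of \emph{exceptional} pieces, indexed by the hierarchy of tangential returns (``children'' in the Palis--Yoccoz terminology) that the orbits of points in $W^s(\Lambda_{g_t})\setminus \widetilde{\mathcal{R}}^{\infty}_{+}$ must traverse before being absorbed into the well-behaved asymptotics. Each child at generation $n$ is a thin parabolic strip whose geometry is controlled by the distortion estimates of \cite{PY09}: its length along the unstable direction and its width transverse to the stable lamination decay at complementary rates determined by the eigenvalues at $p_s,p_u$ and by the parabolic profile at the tangency $q$. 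The main task is then to verify a Bowen-type summability estimate at exponent $1+d_s$ for the collection of all such children across all generations.

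The hypotheses \eqref{e.Dset'} and \eqref{e.Dset} enter precisely at the level of this summability estimate. Condition \eqref{e.Dset'} ties $\beta^*(d_s^0,d_u^0)$ to the ratios of stable and unstable Lyapunov exponents at the periodic saddles; its role is to ensure that the naive rectangular geometry of each child, as seen in the linearizing coordinates at $p_s$ and $p_u$, is compatible with the actual parabolic shape acquired near the tangency, so that no factor is lost from the anisotropy of the return dynamics. Condition \eqref{e.Dset}, $\beta^*(d_s^0, d_u^0) > 5/3$, is the quantitative threshold at which the geometric series obtained by summing the $(1+d_s)$-weighted covering cost of all children, weighted by their multiplicity, converges; in the symmetric case $d_s^0 = d_u^0$ this is precisely the bound $d_s^0 + d_u^0 < 22/21$ mentioned in the abstract.

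The main obstacle is the construction of efficient covers of the exceptional pieces. A crude bounding-box cover of each child gives only the non-sharp estimate of Theorem \ref{t.PY09-Thm6}; to obtain the sharp value $1+d_s$, one must exploit the parabolic concentration of orbits near $q$ to subdivide each child into sub-rectangles of substantially smaller $(1+d_s)$-dimensional mass, and organize these sub-covers so that the transition from one generation of children to the next reproduces the Bowen-type equation characterizing $d_s$ as the transverse dimension of $\widetilde{\mathcal{R}}^{\infty}_{+}$. Implementing this self-similarly across all generations, while tracking the non-linear distortion of $g_t$ and the varying eccentricity of the parabolic cores, is the delicate bookkeeping that occupies the bulk of the argument and is precisely where the strengthened hypotheses \eqref{e.Dset'} and \eqref{e.Dset} seem to be indispensable.
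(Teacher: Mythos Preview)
Your high-level architecture is right: the lower bound comes from the lamination $\widetilde{\mathcal{R}}^{\infty}_{+}$, the upper bound reduces via Proposition~\ref{p.HD-Ws-E+} to showing $HD(\mathcal{E}^+)<1+d_s$, and the exceptional set is stratified by admissible sequences $(P_0,\dots,P_k)$ of parabolic returns. But the mechanism you describe for covering $\mathcal{E}^+(P_0,\dots,P_k)$ --- ``subdivide each child into sub-rectangles of substantially smaller $(1+d_s)$-dimensional mass'' --- is essentially the strategy of \cite{MP}, and the paper explicitly notes (Remark~\ref{r.MP}) that this \emph{cannot} yield the sharp exponent. The missing idea is Lemma~\ref{l.HD-maps-scale-r}: rather than covering the pullback by rectangles, one bounds the $d$-Hausdorff measure of $(\widetilde{F}^{(k)})^{-1}(S)$ for each small disk $S$ by $C\,r^d K^{2-d} L^{d-1}$, interpolating between the Lipschitz norm $K$ of $D(\widetilde{F}^{(k)})^{-1}$ and its Jacobian bound $L$. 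The Jacobian is telescopically small (Lemma~\ref{l.Fk-Jacobian}: $L_k\le C^k|P_0|\delta_{k-1}^{-1}$), and it is this factor $L^{d-1}$, invisible to any rectangular cover, that pulls the dimension down to $1+d_s$.

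You have also misidentified where the two hypotheses enter. Condition~\eqref{e.Dset'} is not about compatibility of rectangular and parabolic geometry; its role is purely to make the eigenvalue constraint~\eqref{e.beta-def-0} on $\beta$ vacuous, so that $\beta$ (and hence $\widetilde{\beta}$) may be chosen arbitrarily close to $\beta^*$. Condition~\eqref{e.Dset}, $\beta^*>5/3$, is not the threshold for convergence of a Bowen sum; it first appears as the requirement $\sum_{\ell\ge1}\widetilde{\beta}^{-\ell}<3/2$, which guarantees that the successive pre-images of the disks in $\mathcal{O}_k$ under $G\circ\widetilde{F}_{k-1}\circ\cdots$ remain inside the extended strips $\widetilde{Q}_j$ (so that the derivative and Jacobian estimates are valid at all), and it reappears at the end of the proof of Theorem~\ref{t.MPY-B-1} as the algebraic condition ensuring $d-1<d_s^0$ in case~(ii). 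The final summability over $P_0$ and $Q_k$ is handled by H\"older's inequality and the convergence results on pages~186--188 of \cite{PY09}, not by a new Bowen-type argument.
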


\begin{remark}\label{r.past-future} Analogously to \cite{PY09}, there is a symmetry between past and future in our arguments. Thus, the analog of Theorem \ref{t.MPY-A} for the unstable set $W^u(\Lambda_{g_t})$ holds after exchanging the roles of $d_s^0$ and $d_u^0$, etc. 
\end{remark}

\begin{remark} Of course, we believe that the conditions \eqref{e.Dset'} and \eqref{e.Dset} are not necessary for the validity of the conclusion of Theorem \ref{t.MPY-A}, but our proof of this result in Section \ref{s.MPY-B} below does not allow us to bypass these technical conditions. We hope to come back to this issue in the future. 
\end{remark}

\begin{remark}\label{r.Dset'} The condition \eqref{e.Dset'} is automatic in the \emph{conservative} case (when $g_0$ preserves a smooth area form). Indeed, the multipliers $\lambda(p_{\alpha})$ and $\mu(p_{\alpha})$ verify $\lambda(p_{\alpha})\mu(p_{\alpha}) = 1$ in this situation, so that \eqref{e.Dset'} becomes the requirement $\beta^*(d_s^0,d_u^0)\leq 2$ which is always true when $d_s^0+d_u^0>1$.   

Similarly, the condition \eqref{e.Dset'} is automatic if $K_{g_0}$ is a product of two affine Cantor sets $K^s$ and $K^u$ of the real line obtained from affine maps with constant dilatations $1/\lambda>1$ and $\mu >1$ sending two finite collections of $\ell\in\mathbb{N}$ disjoint closed subintervals of $[0,1]$ surjectively on their convex hull $[0,1]$. In fact, it is well-known that the transverse Hausdorff dimensions of such a horseshoe $K_{g_0}$ are $d_s^0 = \log\ell/\log(1/\lambda)$ and $d_u^0 = \log\ell/\log\mu$, so that the requirement \eqref{e.Dset'} becomes 
\begin{eqnarray*}
\beta^*(d_s^0, d_u^0)&\leq& 1+\min\left\{\frac{\log(1/\lambda)}{\log\mu}, \frac{\log\mu}{\log(1/\lambda)}\right\} = 
1+\min\left\{\frac{d_s^0}{d_u^0}, \frac{d_u^0}{d_s^0}\right\} \\ 
&=& \frac{d_s^0 + d_u^0}{\max\{d_s^0,d_u^0\}}
\end{eqnarray*} 
which is always valid when $d_s^0 + d_u^0>1$. 

In summary, it is ``often'' the case that the condition \eqref{e.Dset'} is \emph{less} restrictive than the condition \eqref{e.Dset} in ``many'' applications of Theorem \ref{t.MPY-A}.  
\end{remark}

\begin{remark}\label{r.MPY2} A natural question closely related to the statement of Theorem \ref{t.MPY-A} is: given a strongly regular parameter $t$, what is the Hausdorff dimension of the non-uniformly hyperbolic horseshoe $\Lambda_{g_t}$ \emph{itself}? Of course, it is reasonable to conjecture that a non-uniformly hyperbolic horseshoe $\Lambda_{g_t}$ has the ``expected'' dimension $HD(\Lambda_{g_t})=d_s+d_u$. In this direction, let us observe that Theorem \ref{t.MPY-A} implies only that $HD(\Lambda_{g_t})\leq 1+\min\{d_s,d_u\}$ (since $\Lambda_{g_t}=W^s(\Lambda_{g_t})\cap W^u(\Lambda_{g_t})$), but this is still far from the ``expected'' value (as $d_s+d_u<1+\min\{d_s,d_u\}$). We plan to address elsewhere the question of computing $HD(\Lambda_{g_t})$ for strongly regular parameters $t$.
\end{remark}

For the sake of comparison\footnote{In view of Remark \ref{r.Dset'}, we can ``ignore'' \eqref{e.Dset'} (in some examples) when trying to compare the restrictions imposed in Theorems \ref{t.PY09-Thm6} and \ref{t.MPY-A}.} of the conditions \eqref{e.PYset} and \eqref{e.Dset}, we plotted below (using \emph{Mathematica}) the portions of the regions 
$$\mathcal{D}=\{(d_s^0,d_u^0)\in[0,1]\times [0,1]: (d_s^0, d_u^0) \textrm{ satisfies } \eqref{e.Dset}\}$$
and 
$$\mathcal{PY}=\{(d_s^0,d_u^0)\in[0,1]\times [0,1]: (d_s^0, d_u^0) \textrm{ satisfies } \eqref{e.PYset}\}$$
below\footnote{The other portion is obtained by reflection along the diagonal.} the diagonal $\Delta=\{(d_s^0, d_u^0)\in\mathbb{R}^2: 1/2 < d_s^0 = d_u^0 < 1\}$.

\begin{figure}[hbt!]\label{f.MPYregion}
\includegraphics[scale=0.48]{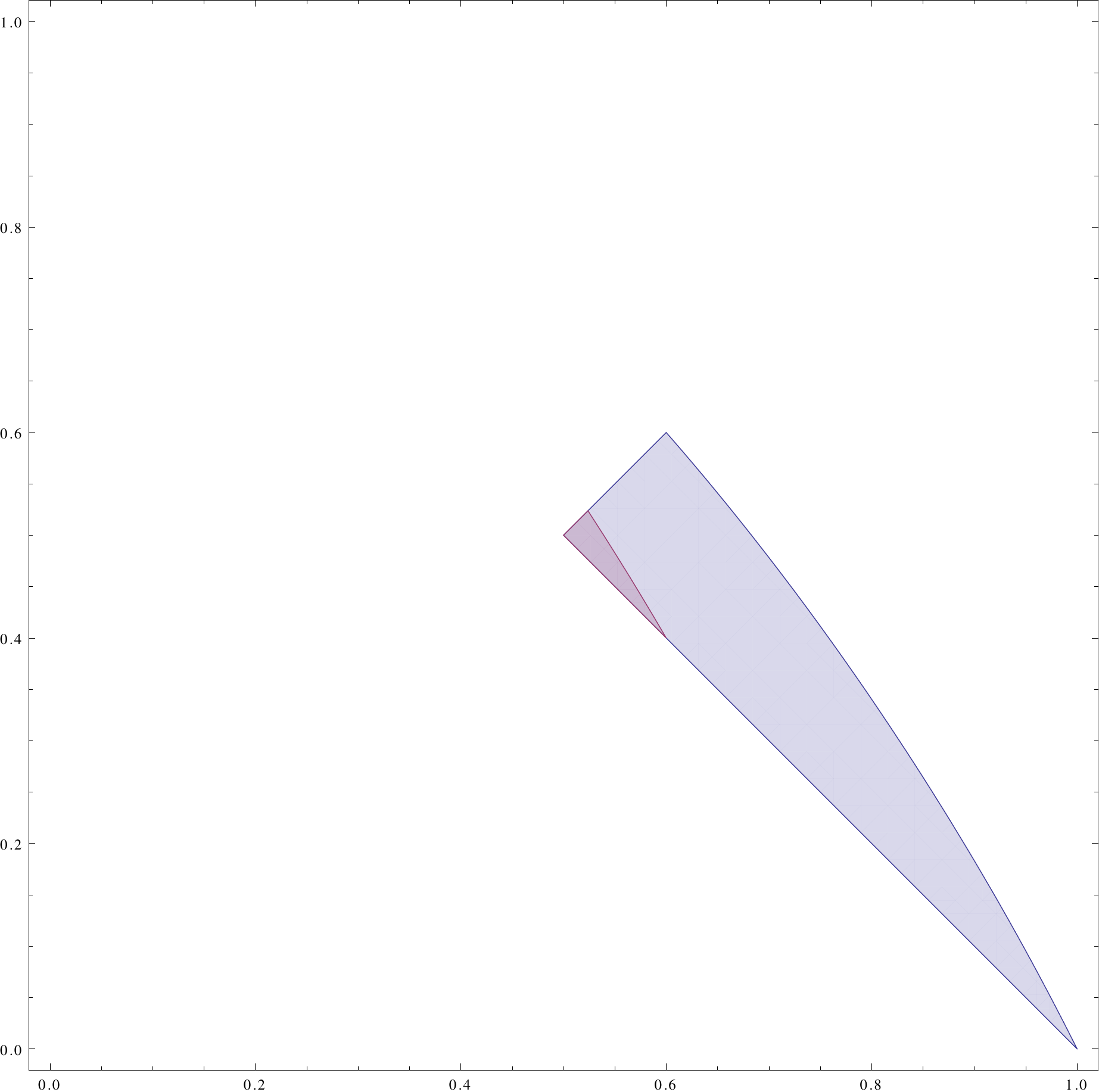}
\caption{ $\mathcal{D}$ (in red) inside $\mathcal{PY}$ (in blue).}\label{f.MPYstable1}
\end{figure}

We have $\mathcal{D}$ occupies slightly more than $3\%$ of $\mathcal{PY}$: 
$$\frac{\textrm{area}(D)}{\textrm{area}(\mathcal{PY})} = 0.030136...$$ 

These regions intersect the diagonal segment 
$$\Delta=\{(d_s^0, d_u^0)\in\mathbb{R}^2: 1/2 < d_s^0 = d_u^0 < 1\}$$
along 
$$\mathcal{PY}\cap\Delta=\{(d_s^0, d_u^0)\in\mathbb{R}^2: 1/2 < d_s^0 = d_u^0 < 3/5\}$$
and 
$$\mathcal{D}\cap\Delta=\{(d_s^0, d_u^0)\in\mathbb{R}^2: 1/2 < d_s^0 = d_u^0 < 11/21\}.$$ 

\begin{remark} The symmetry in Remark \ref{r.past-future} actually implies that $HD(W^s(g_t)) = 1 + d_s$ \emph{and} $HD(W^u(g_t)) = 1 + d_u$ if $t$ is a strongly regular parameter, $(d_s^0, d_u^0)$ satisfy \eqref{e.Dset'}, and $(d_s^0,d_u^0)$ belongs to the region $\mathcal{D}$. 
\end{remark} 

\subsection{Outline of the proof of the main result} Recall from Palis--Yoccoz paper \cite{PY09} that the stable set $W^s(\Lambda)$ of a non-uniformly hyperbolic horseshoe $\Lambda$ can be written as the disjoint union of an \emph{exceptional part} $\mathcal{E}^+$ and a lamination with $C^{1+Lip}$-leaves and Lipschitz holonomy with transverse Hausdorff dimension $0<d_s<1$ is close to the stable dimension $d_s^0$ of the initial horseshoe $K_{g_0}$. 

Hence, the proof of Theorem \ref{t.MPY-A} is reduced to show that the Hausdorff dimension of the exceptional part $\mathcal{E}^+$ of $W^s(\Lambda)$ is $HD(\mathcal{E}^+)<1+d_s$.  

By definition, the points of $\mathcal{E}^+$ visit a sequence of ``strips''  $(P_k)_{k\in\mathbb{N}}$  whose ``widths'' decay \emph{doubly exponentially} fast (cf. Lemma 24 of \cite{PY09}). In particular, by fixing $k\in\mathbb{N}$ large and by decomposing the strip $P_k$ into squares, we obtain a covering of very small diameter of the \emph{image} of $\mathcal{E}^+$ under some positive iterate of the dynamics.

It was shown in \cite{MP} that the covering of the images of $\mathcal{E}^+$ in the previous paragraph can be used to prove that $HD(\mathcal{E}^+)<2$. More concretely, the negative iterates of the dynamics take the covering of $P_k$ back to $\mathcal{E}^+$ while alternating between affine-like (hyperbolic) iterates and a fixed folding map. In principle, the folding effect accumulates very quickly, but if we \emph{ignore} the action of folding map by replacing all ``parabolic shapes'' by ``fat strips'', then we obtain a cover of $\mathcal{E}^+$ with small diameter and \emph{controlled} cardinality thanks to the double exponential decay of $P_j$'s. As it turns out, this suffices to establish $HD(\mathcal{E}^+)<2$, but this strategy does \emph{not} yield $HD(\mathcal{E}^+)<1+d_s$ (cf. Remark \ref{r.MP} above).  

For this reason, during the proof of Theorem \ref{t.MPY-A}, we do \emph{not} completely ignore the ``parabolic shapes'' mentioned above. In fact, we estimate the contribution of the parabolic shapes inside the  $P_j$'s to the Hausdorff dimension of $\mathcal{E}^+$ in terms of the derivative and Jacobian of the dynamics thanks to an \emph{analytical} lemma (cf. Lemma \ref{l.HD-maps-scale-r} below) saying that the Hausdorff measure of scale $1$ of the image $f(\mathbb{D}^2)$ of the unit disk $\mathbb{D}^2=\{(x,y)\in\mathbb{R}^2: x^2+y^2\leq 1\}$ under a $C^1$-map $f:\mathbb{D}^2\to\mathbb{R}^2$ is bounded \emph{by interpolation} of the $C^0$-norms of the derivative and Jacobian of $f$. Also, we prove that this estimate is sufficient to derive $HD(\mathcal{E}^+)<1+d_s$ when the double exponential rate of decays of widths of $P_j$'s is adequate (namely, \eqref{e.Dset} holds). Furthermore, we prove the analytical lemma by decomposing dyadically $f(\mathbb{D}^2)$ and by interpreting the $d$-Hausdorff measure of $f(\mathbb{D}^2)$ as a $L^d$-norm. In this way, for $1<d<2$, we can estimate this $L^d$-norm by interpolation between certain $L^1$ and $L^2$ norms that are naturally controlled by the derivatives and Jacobians of $f$. 

In summary, the novelty in the proof of Theorem \ref{t.MPY-A} (in comparison with Theorem \ref{t.PY09-Thm6}) is the application of the analytical lemma described above to control the Hausdorff measure of $\mathcal{E}^+$.

\begin{remark} Similarly to \cite{MP}, our main result holds for the \emph{same} strongly regular parameters of Palis-Yoccoz \cite{PY09}. 
\end{remark}

\begin{remark} The arguments outlined above provide sequences of good coverings of the stable and unstable sets $W^s(\Lambda)$ and $W^u(\Lambda)$ permitting to calculate their Hausdorff dimensions. However, in relation with Remark \ref{r.MPY2} above, let us observe that it is not obvious how to \emph{combine} these sequences to produce good coverings of the non-uniformly hyperbolic horseshoe $\Lambda$ \emph{itself} allowing to compute its Hausdorff dimension. In fact, the naive idea of taking intersections of elements of coverings of $W^s(\Lambda)$ and $W^u(\Lambda)$ in order to produce a cover of $\Lambda=W^s(\Lambda)\cap W^u(\Lambda)$ does not work \emph{directly} because of the possible ``lack of transversality'' (especially near $\mathcal{E}^+\cap\mathcal{E}^-$) that allows for a potentially bad geometry of such coverings of $\Lambda$.  
\end{remark}

\subsection{Organization of the paper} We divide the remainder of this article into two parts. In Section \ref{s.preliminaries}, we recall some facts from Palis--Yoccoz article \cite{PY09}. In Section \ref{s.MPY-B}, we prove an analytical lemma (cf. Lemma \ref{l.HD-maps-scale-r}) about Hausdorff measures of planar sets and we apply it to get Theorem \ref{t.MPY-A}.  

\subsection*{Acknowledgments} We are grateful to the following institutions for their hospitality during the preparation of this article: Coll\`ege de France, Instituto de Matem\'atica Pura e Aplicada (IMPA), and Kungliga Tekniska h\"ogskolan (KTH). The authors were partially supported by the Balzan Research Project of J. Palis, the French ANR grant ``DynPDE'' (ANR-10-BLAN 0102) and the Brazilian CAPES grant (88887.136371/2017-00).

\section{Preliminaries}\label{s.preliminaries}

In this section, we review some basic properties of the non-uniformly horseshoes introduced in \cite{PY09} (see also Section 2 of \cite{MP}).  

\subsection{Strongly regular parameters}\label{ss.strong-regular-1} Let $0<\varepsilon_0\ll\tau\ll 1$ be two very small constants, and define a sequence of scales $\varepsilon_{k+1} = \varepsilon_k^{1+\tau}$, $k\in\mathbb{N}$. The inductive scheme in \cite{PY09} defining the \emph{strongly regular parameters} goes as follows. The initial \emph{candidate} interval is $I_0=[\varepsilon_0, 2\varepsilon_0]$. The $k$th step of induction consists in dividing the selected candidate intervals of the previous step into $\lfloor\varepsilon_k^{-\tau}\rfloor$ disjoint candidates of lengths $\varepsilon_{k+1}$. These new candidates are submitted to a \emph{strong regularity test} and we select for the $(k+1)$th step of induction \emph{only} the candidates passing this test. 

By definition, $t\in I_0 = [\varepsilon_0,2\varepsilon_0]$ is \emph{strongly regular parameter} whenever  $\{t\}=\bigcap\limits_{k\in\mathbb{N}} I_k$ where $I_0\supset \dots\supset I_k\supset \dots$ are  selected candidate intervals.

The strong regularity tests are relevant for two reasons (at least). Firstly, they are rich enough to ensure several nice properties of ``non-uniform hyperbolicity'' of $\Lambda_{g_t}$ for strongly regular parameters $t\in I_0$. Secondly, they are sufficiently flexible to allow the presence of many strongly regular parameters: by Corollary 15 of \cite{PY09}, the set of strongly regular parameters $t\in I_0=[\varepsilon_0, 2\varepsilon_0]$ has Lebesgue measure $\geq \varepsilon_0(1 - 3\varepsilon_0^{\tau^2})$. 

The notion of strong regularity tests is intimately related to an adequate class $\mathcal{R}(I)$ of \emph{affine-like iterates} attached to each candidate interval $I$. 

In the next three subsections, we briefly recall the construction of $\mathcal{R}(I)$. 

\subsection{Semi-local dynamics of heteroclinic bifurcations}

We fix \emph{geometrical Markov partitions} of the horseshoes $K_{g_t}$ depending smoothly on $g_t$. In other terms, we choose a finite system of smooth charts $I_a^s\times I_a^u\to R_a\subset M$ indexed by a finite alphabet $a\in\mathcal{A}$ with the properties that these charts depend smoothly on $g_t$, the intervals $I_a^s$ and $I_a^u$ are compact, the rectangles $R_a$ are disjoint, and the horseshoe $K_{g_t}$ is the maximal invariant set of the interior of $R:=\bigcup\limits_{a\in\mathcal{A}}R_a$, the family $(K_{g_t}\cap R_a)_{a\in\mathcal{A}}$ is a Markov partition of $K_{g_t}$. Moreover, we assume that no rectangle meets the orbits of $p_s$ and $p_u$ at the same time. 

\begin{remark}\label{r.security-margin-1} The intervals $I_a^s = [x_a^-, x_a^+]$, $I_a^u=[y_a^-, y_a^+]$, $a\in\mathcal{A}$, above can be replaced by slightly \emph{larger} intervals $J_a^s = [x_a^- - C^{-1}, x_a^+ + C^{-1}]$, $J_a^u = [y_a^- - C^{-1}, y_a^+ + C^{-1}]$ (where $C=C(g_0)\geq 1$ is a large constant) without changing any of the  properties in the previous paragraph. This fact will be used later during the discussion of affine-like iterates. 
\end{remark}

The Markov partition $(K_{g_t}\cap R_a)_{a\in\mathcal{A}}$ allows to topologically conjugate the dynamics of $g_t$ on $K_{g_t}$ and the subshift of finite type of $\mathcal{A}^{\mathbb{Z}}$ with transitions 
$$\mathcal{B}:=\{(a,a')\in\mathcal{A}^2: g_0(R_a)\cap R_{a'}\cap K_{g_0}\neq\emptyset\}.$$

Furthermore, for each $g_t$ with $t>0$, we have a compact lenticular region $L_u\subset R_{a_u}$ (near the initial heteroclinic tangency point $q\in M\setminus K$ of $g_0$)
bounded by a piece of the unstable manifold of $p_u$ and a piece of the stable manifold of $p_s$. Moreover, $L_u$ moves outside $R$ for $N_0-1$ iterates of $g_t$ before entering $R$ (for some integer $N_0=N_0(g_0)\geq 2$) because no rectangle meets both orbits of $p_s$ and $p_u$. The image $L_s=g_t^{N_0}(L_u)$ of $L_u$ under $G:=g_t^{N_0}|_{L_u}$ defines another lenticular region $L_s\subset R_{a_s}$ and the regions $g^{i}(L_u)$, $0\leq i\leq N_0$ are called \emph{parabolic tongues}.

Let $\widehat{R}:=R\cup\bigcup\limits_{0<i<N_0} g^i(L_u)$. By definition, the set $\Lambda_g$ introduced in \eqref{e.Lambda-gt} is the maximal invariant set of $\widehat{R}$, i.e., $\Lambda_{g_t}=\bigcap\limits_{n\in\mathbb{Z}} g_t^{-n}(\widehat{R})$. 

The dynamics of $g_t$ on $\widehat{R}$ is driven by the transition maps 
$$g_{aa'}=g_t|_{R_a\cap g^{-1}(R_{a'})}:R_a\cap g_t^{-1}(R_{a'})\to g_t(R_a)\cap R_{a'}, \quad (a,a')\in\mathcal{B},$$
related to the Markov partition $R$, and the folding map $G=g_t^{N_0}|_{L_u}:L_u\to L_s$ between the parabolic tongues.

Qualitatively speaking, the transitions $g_{aa'}$ correspond to ``affine'' hyperbolic maps: for our choices of charts, $g_{aa'}$ contracts ``almost vertical'' directions and expands ``almost horizontal'' directions. Of course, this hyperbolic structure can be destroyed by the folding map $G$ and this phenomenon is the source of non-hyperbolicity of $\Lambda_{g_t}$. 

For this reason, the notion of non-uniformly hyperbolic horseshoes is defined in \cite{PY09} in terms of a certain ``affine-like'' iterates of $g_t$. Before entering into this discussion, let us quickly overview the notion of \emph{affine-like maps}. 

\subsection{Generalities on affine-like maps} Let $I_0^s, I_0^u, I_1^s$ and $I_1^u$ be compact intervals with coordinates $x_0,y_0, x_1$ and $y_1$. A diffeomorphism $F$ from a \emph{vertical strip}
$$P:=\{(x_0,y_0):\varphi^-(y_0)\leq x_0\leq\varphi^+(y_0)\}\subset I_0^s\times I_0^u$$
onto a \emph{horizontal strip}
$$Q:=\{(x_1,y_1): \psi^-(x_1)\leq y_1\leq \psi^+(x_1)\}\subset I_1^s\times I_1^u$$
is \emph{affine-like} whenever the projection from the graph of $F$ to $I_0^u\times I_1^s$ is a diffeomorphism onto $I_0^u\times I_1^s$. 

By definition, an affine-like map $F$ has an \emph{implicit representation} $(A,B)$, i.e., there are smooth maps $A$ and $B$ on $I_0^u\times I_1^s$ such that $F(x_0,y_0)=(x_1,y_1)$ if and only if $x_0=A(y_0,x_1)$ and $y_1=B(y_0,x_1)$. 

For our purposes, we shall consider \emph{exclusively} affine-like maps satisfying a \emph{cone condition} and a \emph{distortion estimate}. More concretely, let $\lambda>1$, $u_0>0$, $v_0>0$ with $1<u_0v_0\leq\lambda^2$ and $D_0>0$ be the constants fixed in page 32 of \cite{PY09}: their choices  depend solely on $g_0$.  

An affine-like map $F(x_0,y_0)=(x_1,y_1)$ with implicit representation $(A,B)$ satisfies a $(\lambda, u, v)$ \emph{cone condition} if  
$$\lambda|A_x|+u_0|A_y|\leq 1 \quad \textrm{ and } \quad \lambda|B_y|+v_0|B_x|\leq 1$$
where $A_x, A_y, B_x, B_y$ are the first order partial derivatives of $A$ and $B$. Also, an affine-like map $F(x_0,y_0)=(x_1,y_1)$ with implicit representation $(A,B)$ satisfies a $2D_0$ \emph{distortion  condition} if 
$$\partial_x\log|A_x|, \partial_y\log|A_x|, A_{yy}, \partial_y\log|B_y|, \partial_x\log|B_y|, B_{xx}$$
are uniformly bounded by $2D_0$. 

\begin{remark}\label{r.width-def.} The widths of the domain $P$ and the image $Q$ of an affine-like map $F:P\to Q$ with implicit representation $(A,B)$ are 
$$|P|:=\max|A_x| \quad \textrm{and} \quad |Q|:=\max|B_y|$$
The widths satisfy $|P|\leq C\min|A_x|$ and $|Q|\leq C\min|B_y|$ where $C=C(g_0)\geq1$.
\end{remark}

The transitions $g_{aa'}$ associated to the Markov partition $R$ of the horseshoe $K_{g_t}$ are affine-like maps satisfying the cone and distortion conditions with parameters $(\lambda, u_0,v_0,2D_0)$: see  Subsection 3.4 of \cite{PY09}. 

Moreover, we can build new affine-like maps using the so-called \emph{simple} and \emph{parabolic} compositions of two affine-like maps. 

Given compact intervals $I_j^s, I_j^u$, $j=0,1,2$, and two affine-like maps $F:P\to Q$ and $F':P'\to Q'$ with domains $P\subset I_0^s\times I_0^u$ and 
$P'\subset I_1^s\times I_1^u$ and images $Q\subset I_1^s\times I_1^u$ and $Q'\subset I_2^s\times I_2^u$ satisfying the $(\lambda, u_0, v_0)$ cone condition, the map $F''=F'\circ F$ from $P''=P\cap F^{-1}(P')$ to $Q''=Q\cap P'$ is an affine-like map satisfying the $(\lambda^2, u_0, v_0)$ cone condition (see Subsection 3.3 of \cite{PY09}). The map $F''$ is the \emph{simple composition} of $F$ and $F'$. 

Given compact intervals $I_j^s, I_j^u$, $j=0,1$, and two affine-like maps $F_0:P_0\to Q_0$, $F_1:P_1\to Q_1$ from vertical strips $P_0\subset I_0^s\times I_0^u$, $P_1\subset I_{a_s}^s\times I_{a_s}^u$ to horizontal strips $Q_0\subset I_{a_u}^s\times I_{a_u}^u$, $Q_1\subset I_1^s\times I_1^u$, we can introduce a quantity $\delta(Q_0, P_1)$ roughly measuring the distance between $Q_0$ and the tip of the parabolic strip $G^{-1}(P_1)$ (where $G$ is the folding map): see Subsection 3.5 of \cite{PY09}. If 
$$\delta(Q_0, P_1)>(1/b)(|P_1|+|Q_0|)$$
and the implicit representations of $F_0$ and $F_1$ to satisfy the bound $$\max\{|(A_1)_y|, |(A_1)_{yy}|, |(B_0)_x|, |(B_0)_{xx}|\}<b$$
for an adequate constant $b=b(g_0)>0$, the composition $F_1\circ G\circ F_0$ defines two affine-like maps $F^{\pm}:P^{\pm}\to Q^{\pm}$ with domains $P^{\pm}\subset P_0$ and $Q^{\pm}\subset Q_1$ called the \emph{parabolic compositions} of $F_0$ and $F_1$.

\subsection{The class $\mathcal{R}(I)$ of certain affine-like iterates} Given a parameter interval $I\subset [\varepsilon_0,2\varepsilon_0]$, a triple $(P,Q,n)=(P_t, Q_t, n)_{t\in I}$ is called a $I$-\emph{persistent affine-like iterate} if  $P_t\subset R_a$, resp. $Q_t\subset R_{a'}$, is a vertical, resp. horizontal, strip varying smoothly with $t\in I$, $n\geq 0$ is an integer such that $g_t^n|_{P_t}:P_t\to Q_t$ is an affine-like map for all $t\in I$, and $g_t^m(P_t)\subset\widehat{R}$ for each $0\leq m\leq n$.

Given a candidate parameter interval $I$, it is assigned in Subsection 5.3 of \cite{PY09} a class $\mathcal{R}(I)$ of certain $I$-persistent affine-like iterates verifying seven requirements (R1) to (R7): 
\begin{itemize}
\item[(R1)] the transitions $g_{aa'}:R_a\cap g_t^{-1}(R_{a'})\to g_t(R_a)\cap R_{a'}$, 
$(a,a')\in\mathcal{B}$, belong to $\mathcal{R}(I)$, 
\item[(R2)] each $(P,Q,n)\in\mathcal{R}(I)$ is a $I$-persistent affine-like iterate satisfying the $(\lambda, u_0, v_0)$ cone condition and the $2D_0$ distortion condition, 
\item[(R3)] the class $\mathcal{R}(I)$ is stable under simple compositions, 
\item[(R4)] denote by $P_s$, resp. $Q_u$, the smallest cylinder of the Markov partition of $K_{g_t}$ containing $L_s$, resp. $L_u$; if $(P,Q,n)\in\mathcal{R}(I)$ and $P\subset P_s$, then $|A_y|, |A_{yy}|\leq C\varepsilon_0$ for all $t\in I$; similarly, if $(P,Q,n)\in\mathcal{R}(I)$ and $Q\subset Q_u$, then $|B_x|, |B_{xx}|\leq C\varepsilon_0$ for all $t\in I$, 
\item[(R5)] the class $\mathcal{R}(I)$ is stable under certain \emph{allowed} parabolic compositions (cf. page 33 of \cite{PY09}), 
\item[(R6)] each $(P,Q,n)\in\mathcal{R}(I)$ with $n>1$ is obtained from simple or allowed parabolic compositions of shorter elements,
\item[(R7)] if the parabolic composition of $(P_0, Q_0, n_0), (P_1, Q_1, n_1)\in\mathcal{R}(I)$ is allowed, then 
$$\delta(Q_0, P_1)\geq (1/C)(|P_1|^{1-\eta}+|Q_0|^{1-\eta})$$
where $\delta(Q_0, P_1)$ is the distance between $Q_0$ and the tip of $G^{-1}(P_1)$, $C=C(g_0)\geq 1$, and the parameter $\eta$ relates to $\varepsilon_0$ and $\tau$ via the condition $0<\varepsilon_0\ll\eta\ll\tau<1$. 
\end{itemize} 
Furthermore, Theorem 1 of \cite{PY09} ensures that the class $\mathcal{R}(I)$ satisfying (R1) to (R7) above is \emph{unique}. 

For technical reasons, we will need to work with \emph{extensions} of the elements $\mathcal{R}(I)$. More concretely, we consider the intervals $J^s_a$, $J^u_a$ from Remark \ref{r.security-margin-1} and we denote by $S:=\bigcup\limits_{a\in\mathcal{A}}S_a$ the geometric Markov partition associated to smooth charts $J_a^s\times J_a^u\to S_a$. We say that $(\widetilde{P}, \widetilde{Q}, n)$ \emph{extends} $(P,Q,n)\in\mathcal{R}(I)$ if $(\widetilde{P}, \widetilde{Q}, n)$ is an affine-like map with respect to $S$ satisfying the $(\lambda, u_0, v_0)$ cone condition and the $3D_0$ distortion condition such that the restriction of $(\widetilde{P}, \widetilde{Q}, n)$ to $R$ is $(P,Q,n)$. Note that if $(\widetilde{P}, \widetilde{Q}, n)$ extends $(P,Q,n)$, then $\widetilde{P}$ is a strip of width $\leq C|P|$ containing a $C^{-1}|P|$-neighborhood of $P$ and $\widetilde{Q}$ is a strip of width $\leq C|Q|$ containing a $C^{-1}|Q|$-neighborhood of $Q$ (where $C=C(g_0)\geq 1$) thanks to the cone and distortion conditions. 

\begin{proposition}\label{p.security-margin} Each element $(P, Q, n)\in\mathcal{R}(I)$ admits an extension.  
\end{proposition}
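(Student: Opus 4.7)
The plan is to induct on $n$, following the inductive construction of $\mathcal{R}(I)$ via the rules (R1), (R3), (R5). The base case consists of the transitions $g_{aa'}\in\mathcal{R}(I)$, which are restrictions of the smooth diffeomorphism $g_t$. For these I define the extension as the restriction of $g_t$ to $S_a\cap g_t^{-1}(S_{a'})$. Since $g_t$ is smooth and uniformly hyperbolic on a full neighborhood of the original horseshoe (covering $S$, provided the constant $C$ defining the security margin from Remark \ref{r.security-margin-1} is taken large enough), this restriction is affine-like, satisfies the $(\lambda,u_0,v_0)$ cone condition, and has distortion at most $3D_0$. The extra slack $D_0=3D_0-2D_0$ is exactly the cushion absorbing the additional $C^{0}$ error of $Dg_t$ and $D^{2}g_t$ on the fringe $S\setminus R$ of width $O(C^{-1})$, and this is fine because $\lambda, u_0, v_0, D_0$ depend only on $g_0$.

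For the inductive step on simple compositions, suppose $(P,Q,n)$ arises as the simple composition of $(P_0,Q_0,n_0),(P_1,Q_1,n_1)\in\mathcal{R}(I)$, and let $(\widetilde{P}_i,\widetilde{Q}_i,n_i)$ be extensions provided by the induction hypothesis. Take as candidate extension the simple composition of these two extensions relative to the enlarged partition $S$. This is well defined because $\widetilde{Q}_0\cap\widetilde{P}_1\supset Q_0\cap P_1\neq\emptyset$. The cone condition is preserved by simple composition (in fact upgraded to $(\lambda^{2},u_0,v_0)$, which implies $(\lambda,u_0,v_0)$), and the $3D_0$ distortion bound propagates by the classical bounded distortion argument: the contribution of the non-linearity at step $k$ to the final distortion is weighted by the hyperbolic contraction factor $\lambda^{-(n-k)}$, yielding a geometric series whose sum is controlled uniformly in $n$.

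For an allowed parabolic composition, the essential input is (R7): $\delta(Q_0,P_1)\geq (1/C)(|P_1|^{1-\eta}+|Q_0|^{1-\eta})$. Since the extensions $\widetilde{P}_i,\widetilde{Q}_i$ have widths at most $C|P_i|,C|Q_i|$ respectively, and since $|P_i|,|Q_i|$ are tiny (at most small powers of $\varepsilon_0$) while $1-\eta<1$, one obtains $\delta(\widetilde{Q}_0,\widetilde{P}_1)\geq (1/2C)(|P_1|^{1-\eta}+|Q_0|^{1-\eta})$, which dwarfs the threshold $(1/b)(|\widetilde{P}_1|+|\widetilde{Q}_0|)$ required to form a parabolic composition. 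The cone and distortion estimates for this composition then follow from the parabolic composition formulas used in Subsection 3.5 of \cite{PY09}, the folding map $G$ being globally smooth so its contribution is stable under slight enlargement of the domain. The principal technical obstacle throughout is propagating the distortion bound uniformly in $n$ over arbitrarily long chains of simple and parabolic compositions; this is handled exactly as in the derivation of the $2D_0$ bound for $\mathcal{R}(I)$ itself in \cite{PY09}, the only new book-keeping being an extra additive error of size $O(C^{-1})$ per step that sums geometrically thanks to hyperbolic contraction and is absorbed by the margin $D_0$ upon choosing $\varepsilon_0$ (equivalently, the Markov rectangles) small enough.
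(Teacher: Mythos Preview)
Your approach is essentially the same as the paper's: both construct extensions inductively along the simple/parabolic decomposition of elements of $\mathcal{R}(I)$, starting from the enlarged transitions on $S$ and closing up under the two composition operations. The paper packages this differently: rather than inducting directly on $n$, it lets $\mathcal{S}(I)\subset\mathcal{R}(I)$ be the subclass of elements admitting extensions, verifies that $\mathcal{S}(I)$ satisfies (R1)--(R7), and then invokes the \emph{uniqueness} of $\mathcal{R}(I)$ (Theorem~1 of \cite{PY09}) to conclude $\mathcal{S}(I)=\mathcal{R}(I)$. For (R6) it also inducts on the candidate interval, using the structure theorem (Theorem~2 of \cite{PY09}) to reduce to the parent interval $\widetilde{I}$. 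This packaging lets the paper avoid re-deriving cone and distortion bounds along arbitrary composition chains: closure under each operation is checked once, and uniqueness does the rest.

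Two small points on your write-up. For the parabolic step, checking only the crude threshold $\delta(\widetilde Q_0,\widetilde P_1)>(1/b)(|\widetilde P_1|+|\widetilde Q_0|)$ guarantees that $F_1\circ G\circ F_0$ splits into two affine-like branches, but it is the full transversality package (T1)--(T3) of \cite[p.~34]{PY09} that drives the cone and distortion estimates for allowed parabolic compositions; the paper notes that (T1)--(T3) persist for the enlarged strips with the constant $2$ relaxed to $7/4$, and this is what you should be citing. Second, your last sentence about an ``extra additive error of size $O(C^{-1})$ per step'' summing geometrically is off: the only place the margin $D_0$ is spent is at the base level (the transitions on $S\setminus R$); once you have affine-like extensions with $3D_0$ distortion, the composition lemmas of \cite{PY09} preserve that bound without further loss, so there is no per-step error to sum.
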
 

\begin{proof} Consider the subclass $\mathcal{S}(I)$ of $\mathcal{R}(I)$ consisting of elements admitting an extension. We want to show that $\mathcal{S}(I)=\mathcal{R}(I)$, and, in view of Theorem 1 of \cite{PY09}, it suffices to check that $\mathcal{S}(I)$ verifies the requirements (R1) to (R7). 

The fact that the transitions $g_{aa'}$ can be extended was already observed in Remark \ref{r.security-margin-1}. In particular, $\mathcal{S}(I)$ satisfies (R1). 

The requirements (R2), (R4) and (R7) for $\mathcal{S}(I)$ are automatic (because they concern geometric properties of $(P,Q,n)\in\mathcal{S}(I)\subset\mathcal{R}(I)$ themselves). 

The condition (R3) for $\mathcal{S}(I)$ holds because the simple composition of $(P_0,Q_0,n_0), (P_1,Q_1,n_1)\in\mathcal{S}(I)$ is extended by the simple composition of the extensions of $(P_0,Q_0,n_0)$ and $(P_1,Q_1,n_1)$. 

If $(P_0, Q_0, n_0), (P_1, Q_1, n_1)\in\mathcal{S}(I)$ satisfy the transversality requirement  $Q_0\pitchfork_I P_1$ (from page 34 of \cite{PY09}) allowing parabolic composition, then their extensions $(\widetilde{P}_0, \widetilde{Q}_0, n_0), (\widetilde{P}_1, \widetilde{Q}_1, n_1)$ verify the same transversality requirement after replacing the constant $2$ in (T1), (T2), (T3) in page 34 of \cite{PY09} by $7/4$. From this fact and the discussion of parabolic compositions in Subsections 3.5 and 3.6 of \cite{PY09}, one sees that the parabolic composition of $(\widetilde{P}_0, \widetilde{Q}_0, n_0)$ and $(\widetilde{P}_1, \widetilde{Q}_1, n_1)$ is an extension of the parabolic composition of $(P_0, Q_0, n_0)$ and $(P_1, Q_1, n_1)\in\mathcal{S}(I)$. Therefore, $\mathcal{S}(I)$ satisfies (R5). 

At this point, it remains to check (R6) for $\mathcal{S}(I)$. For this sake, we recall (from Subsection 5.5 of \cite{PY09}) that $\mathcal{R}(I_0)$ consist of all affine-like iterates associated to the horseshoe $K_{g_t}$. In particular, $\mathcal{S}(I_0)=\mathcal{R}(I_0)$ thanks to our discussion so far. On the other hand, if $I$ is a candidate interval distinct from $I_0$ and $\mathcal{S}(\widetilde{I})=\mathcal{R}(\widetilde{I})$ for the smallest candidate interval $\widetilde{I}$ containing $I$, then we can apply the \emph{structure theorem} (cf. Theorem 2 of \cite{PY09}) to write any element $(P,Q,n)\in\mathcal{R}(I)$ not coming from $\mathcal{R}(\widetilde{I})$ as the allowed parabolic compositions of shorter elements $(P_0,Q_0,n_0),\dots, (P_k,Q_k, n_k)\in\mathcal{R}(\widetilde{I})$, $k>0$. Since $\mathcal{S}(\widetilde{I}) = \mathcal{R}(\widetilde{I})$, we conclude that $\mathcal{S}(I)$ verifies (R6). 
\end{proof} 

\subsection{Strong regularity tests} A candidate parameter interval $I$ is tested for several quantitative conditions on the family of so-called \emph{bicritical elements} of $\mathcal{R}(I)$. If a candidate interval $I$ passes this strong regularity test, then all bicritical elements $(P, Q, n)\in\mathcal{R}(I)$ are thin in the sense that  
$$|P|<|I|^{\beta}, \quad |Q|<|I|^{\beta}$$
where $\beta>1$ depends only on $g_0$: more precisely, one imposes the mild condition that \begin{equation}\label{e.beta-def-0}
1<\beta< 1 + \min\{\omega_s,\omega_u\}
\end{equation} 
where $\omega_s=-\frac{\log|\lambda(p_s)|}{\log|\mu(p_s)|}$ and $\omega_u=-\frac{\log|\mu(p_u)|}{\log|\lambda(p_u)|}$ with $\mu(p_s)$, $\mu(p_u)$ denoting the unstable eigenvalues of the periodic points $p_s, p_u$ and $\lambda(p_s)$, $\lambda(p_u)$ denoting the stable eigenvalues of the periodic points $p_s, p_u$, and the important condition that 
\begin{equation}\label{e.beta-def}1<\beta<\frac{(1-\min\{d_s^0, d_u^0\})(d_s^0+d_u^0)}{\max\{d_s^0, d_u^0\}(\max\{d_s^0, d_u^0\} + d_s^0+d_u^0-1)}:=\beta^*(d_s^0,d_u^0)
\end{equation}
(cf. Remark 8 in \cite{PY09}). 

\subsection{Non-uniformly hyperbolic horseshoes and their stable sets}

Let us fix \emph{once and for all} a strongly regular parameter $t\in I_0=[\varepsilon_0, 2\varepsilon_0]$, i.e., $\{t\}=\bigcap\limits_{m\in\mathbb{N}} I_m$ for some decreasing sequence $I_m$ of candidate intervals passing the strong regularity tests. In the sequel, $g_t=g$ denotes the corresponding dynamical system. 

We define $\mathcal{R}:=\bigcup\limits_{m\in\mathbb{N}}\mathcal{R}(I_m)$, and, given a decreasing sequence of vertical strips $P_k$ associated to some affine-like iterates $(P_k, Q_k, n_k)\in\mathcal{R}$, we say that $\omega=\bigcap\limits_{k\in\mathbb{N}} P_k$ is a \emph{stable curve}. 

The set of stable curves is denoted by $\mathcal{R}^{\infty}_+$. The union of stable curves 
$$\widetilde{\mathcal{R}}^{\infty}_+:=\bigcup\limits_{\omega\in\mathcal{R}^{\infty}_+}\omega$$ 
is a lamination by $C^{1+Lip}$ curves with Lipschitz holonomy (cf. Subsection 10.5 of \cite{PY09}). 

The set $\mathcal{R}^{\infty}_+$ is naturally partitioned in terms of \emph{prime elements} of $\mathcal{R}$. More precisely, $(P,Q,n)\in\mathcal{R}$ is called a prime element if it is not the simple composition of two shorter elements. This notion allows to write  $\mathcal{R}^{\infty}_+ := \mathcal{D}_+\cup\mathcal{N}_+$ where $\mathcal{N}_+$ is the set of stable curves contained in infinitely many prime elements and $\mathcal{D}^{\infty}_+$ is the complement of $\mathcal{N}_+$. 

If $\omega\in\mathcal{D}_+$ is a stable curve such that $(P,Q,n)\in\mathcal{R}$ is the thinnest prime element containing $\omega$, then $g^n(\omega)$ is contained in a stable curve $\omega':=T^+(\omega)\in\mathcal{R}^{\infty}_+$. In this way, we obtain a partially defined dynamics $T^+$ on $\mathcal{R}^{\infty}_+ = \mathcal{D}_+\cup\mathcal{N}_+$. The map $T^+:\mathcal{D}_+\to\mathcal{R}^{\infty}_+$ is Bernoulli and uniformly expanding with countably many branches: see  Subsection 10.5 of \cite{PY09}. 

These hyperbolic features of $T_+$ permit to introduce a $1$-parameter family of transfer operators $L_d$ whose dominant eigenvalues $\lambda_d>0$ detect the transverse Hausdorff dimension of the lamination $\widetilde{\mathcal{R}}^{\infty}_+$, i.e., $\widetilde{\mathcal{R}}^{\infty}_+$ has Hausdorff dimension $1+d_s$ where $d_s$ is the unique value of $d$ with $\lambda_d=1$ (cf. Theorem 4 of \cite{PY09}). 

The set $\{z\in W^s(\Lambda): g^n(z)\in\widetilde{\mathcal{R}}^{\infty}_+ \textrm{ for some } n\geq 0\}$ is the so-called \emph{well-behaved part} of the stable set $W^s(\Lambda_g)$. 

Following the Subsection 11.6 of \cite{PY09}, we write 
$$W^s(\Lambda)=\bigcup\limits_{n\geq 0} g^{-n}(W^s(\Lambda,\widehat{R})\cap R))$$
and we split the local stable set $W^s(\Lambda,\widehat{R})\cap R$ into its well-behaved part and its \emph{exceptional part}:
$$W^s(\Lambda,\widehat{R})\cap R := \bigcup\limits_{n\geq 0} \left(W^s(\Lambda,\widehat{R})\cap R\cap g^{-n}(\widetilde{\mathcal{R}}^{\infty}_+)\right)\cup\mathcal{E}^+$$
where 
\begin{equation}\label{e.exceptional-set-def}
\mathcal{E}^+:=\{z\in W^s(\Lambda,\widehat{R})\cap R: g^n(z)\notin\widetilde{\mathcal{R}}^{\infty}_+ \textrm{ for all } n\geq 0\}
\end{equation}
Since $g$ is a diffeomorphism and the $C^{1+Lip}$-lamination $\widetilde{\mathcal{R}}^{\infty}_+$ has transverse Hausdorff dimension $0<d_s<1$, we deduce that the Hausdorff dimension of the stable set $W^s(\Lambda)$ is:
\begin{proposition}\label{p.HD-Ws-E+} $HD(W^s(\Lambda))=\max\{1+d_s, HD(\mathcal{E}^+)\}$.
\end{proposition}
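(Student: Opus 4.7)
The claim is essentially a bookkeeping statement built on two facts from basic geometric measure theory: Hausdorff dimension is countably stable, and it is invariant under $C^1$ diffeomorphisms of compact sets (since such maps are bi-Lipschitz on compacta). My plan is to chase these two facts through the two displayed decompositions and then combine with the fact, already recalled in the excerpt, that the lamination $\widetilde{\mathcal{R}}^\infty_+$ has Hausdorff dimension $1+d_s$.

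First I would handle the outer decomposition
\[
W^s(\Lambda)=\bigcup_{n\geq 0} g^{-n}\bigl(W^s(\Lambda,\widehat{R})\cap R\bigr).
\]
Since $g$ is a smooth diffeomorphism of the compact surface $M$, each $g^{-n}$ is bi-Lipschitz on $W^s(\Lambda,\widehat{R})\cap R$, so every term has Hausdorff dimension equal to $HD(W^s(\Lambda,\widehat{R})\cap R)$. Countable stability of Hausdorff dimension then yields $HD(W^s(\Lambda))=HD(W^s(\Lambda,\widehat{R})\cap R)$.

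Next I would handle the inner decomposition
\[
W^s(\Lambda,\widehat{R})\cap R= \Bigl(\bigcup_{n\geq 0}\bigl(W^s(\Lambda,\widehat{R})\cap R\cap g^{-n}(\widetilde{\mathcal{R}}^\infty_+)\bigr)\Bigr)\cup\mathcal{E}^+.
\]
For the upper bound, each set $W^s(\Lambda,\widehat{R})\cap R\cap g^{-n}(\widetilde{\mathcal{R}}^\infty_+)$ is contained in $g^{-n}(\widetilde{\mathcal{R}}^\infty_+)$, hence has dimension at most $HD(\widetilde{\mathcal{R}}^\infty_+)=1+d_s$ by diffeomorphism invariance; countable stability gives $HD(W^s(\Lambda,\widehat{R})\cap R)\leq \max\{1+d_s,HD(\mathcal{E}^+)\}$. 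For the lower bound, obviously $\mathcal{E}^+\subset W^s(\Lambda,\widehat{R})\cap R$ contributes $HD(\mathcal{E}^+)$, and the $n=0$ piece contributes $HD(\widetilde{\mathcal{R}}^\infty_+\cap W^s(\Lambda,\widehat{R})\cap R)$.

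The only point that requires a brief justification is that this last quantity really equals $1+d_s$, i.e.\ that the whole lamination $\widetilde{\mathcal{R}}^\infty_+$ is (up to a set of smaller dimension) contained in $W^s(\Lambda,\widehat{R})\cap R$. This follows directly from the definitions recalled in Subsection 11.6 of \cite{PY09}: a stable curve $\omega=\bigcap_k P_k$ attached to a decreasing sequence $(P_k,Q_k,n_k)\in\mathcal{R}$ of $I$-persistent affine-like iterates has all its forward iterates in $\widehat{R}$ (the domains $P_k$ are nested strips inside $R$ and their images lie in $\widehat{R}$), so every point of $\omega$ is in the local stable set of $\Lambda$ inside $\widehat{R}$, and by construction $\omega\subset P_0\subset R$. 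Hence $\widetilde{\mathcal{R}}^\infty_+\subset W^s(\Lambda,\widehat{R})\cap R$, yielding the matching lower bound $1+d_s$. Combining the two decompositions gives the proposition.

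I do not anticipate a genuine obstacle: the only delicate point is the inclusion of $\widetilde{\mathcal{R}}^\infty_+$ in the local stable set, but this is immediate from the construction of stable curves as nested intersections of vertical strips of elements of $\mathcal{R}$. All remaining ingredients (countable stability and diffeomorphism invariance of $HD$) are standard and can be invoked in a line.
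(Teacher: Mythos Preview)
Your proposal is correct and follows exactly the approach the paper takes: the paper does not give a separate proof of this proposition but simply states it as an immediate consequence of the two displayed decompositions, the diffeomorphism invariance of Hausdorff dimension, and the fact that the $C^{1+\mathrm{Lip}}$ lamination $\widetilde{\mathcal{R}}^\infty_+$ has transverse dimension $d_s$ (hence full dimension $1+d_s$). You have simply spelled out the steps the paper leaves implicit, including the inclusion $\widetilde{\mathcal{R}}^\infty_+\subset W^s(\Lambda,\widehat{R})\cap R$ needed for the lower bound.
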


For the study of $HD(\mathcal{E}^+)$, it is important to recall that the exceptional set $\mathcal{E}^+$ has a natural decomposition in terms of the successive passages through the so-called  \emph{parabolic cores} of vertical strips (cf. Subsection 11.7 of \cite{PY09}). 

More precisely, the parabolic core $c(P)$ of $(P,Q,n)\in\mathcal{R}$ is the set of points of $W^s(\Lambda,\widehat{R})$ belonging to $P$ but not to any \emph{child}\footnote{$P'$ is a child of $P$ if $P'$ is the vertical strip associated of some $(P',Q',n')\in\mathcal{R}$ obtained by simple compositions of $(P,Q,n)$ with the transition maps $g_{aa'}$ of the Markov partition of the horseshoe $K_g$ or parabolic composition of $(P,Q,n)$ with some element of $\mathcal{R}$ (cf. Section 6.2 of \cite{PY09}).} of $P$. If we denote by $\mathcal{C}_-$ the set of elements $(P_0, Q_0, n_0)\in\mathcal{R}$ with $c(P_0)\neq\emptyset$, then 
$$\mathcal{E}^+=\bigcup\limits_{(P_0, Q_0, n_0)\in\mathcal{C}_-}\mathcal{E}^+(P_0)$$
where $\mathcal{E}^+(P_0):=\mathcal{E}^+\cap c(P_0)$. 

Since $(P_0,Q_0,n_0)\in\mathcal{C}_-$ implies that $g^{n_0}(\mathcal{E}^+(P_0))\subset Q_0\cap L_u\cap\mathcal{E}^+$ and 
$G(g^{n_0}(\mathcal{E}^+(P_0))=g^{n_0+N_0}(\mathcal{E}^+(P_0))\subset L_s\cap\mathcal{E}^+$, we can write  
$$\mathcal{E}^+(P_0):=\bigcup\limits_{(P_1,Q_1,n_1)\in\mathcal{C}_-}\mathcal{E}^+(P_0, P_1)$$
where $\mathcal{E}^+(P_0, P_1):=\{z\in\mathcal{E}^+(P_0): g^{n_0+N_0}(z)\in c(P_1)\}$. 

In general, we can inductively define 
$$\mathcal{E}^+(P_0,\dots, P_k)=\bigcup\limits_{(P_{k+1}, Q_{k+1}, n_{k+1})\in\mathcal{C}_-} \mathcal{E}^+(P_0,\dots, P_k, P_{k+1})$$
so that  
$$\mathcal{E}^+=\bigcup\limits_{(P_0, P_1, \dots, P_k) \textrm{ admissible }}\mathcal{E}^+(P_0,\dots, P_k)$$
where $(P_0,\dots,P_k)$ is \emph{admissible} whenever $\mathcal{E}^+(P_0,\dots, P_k)\neq\emptyset$. 

The admissibility condition on $(P_0,\dots,P_{k+1})$ is a severe geometrical constraint on the elements $(P_i, Q_i, n_i)\in\mathcal{R}$: for example, $(P_0,Q_0,n_0)\in\mathcal{C}_-$, 
\begin{equation}\label{e.11.64}
\max\{|P_1|, |Q_1|\}\leq \varepsilon_0^{\beta}
\end{equation}
and, for $\widetilde{\beta}:=\beta(1-\eta)(1+\tau)^{-1}$,  
\begin{equation}\label{e.Lemma24}
\max\{|P_{j+1}|, |Q_{j+1}|\}\leq C|Q_j|^{\widetilde{\beta}}
\end{equation}
for all $j\geq 1$ (cf. Lemma 24 of \cite{PY09}). 

Hence, by taking $1<\widehat{\beta}<\widetilde{\beta}$, the admissibility condition implies that 
\begin{equation}\label{e.Lemma24'}
\max\{|P_j|,|Q_j|\}\leq\varepsilon_0^{\widehat{\beta}^j}
\end{equation} 
(for $\varepsilon_0$ sufficiently small). Therefore, the widths of the strips $P_j$ and $Q_j$ confining the dynamics of $\mathcal{E}^+$ decay doubly exponentially fast. 

\subsection{Hausdorff measures} Given a bounded subset $X$ of the plane, $0\leq d\leq 2$, and $\delta>0$, the $d$-Hausdorff measure $m^d_{\delta}(X)$ at scale $\delta>0$ of $X$ is the infimum over open coverings $(U_i)_{i\in I}$ of $X$ with diameter $\textrm{diam}(U_i)<\delta$ of the following quantity
$$\sum\limits_{i\in I}\textrm{diam}(U_i)^d.$$
In other terms, $m_{\delta}^d(X)$ is the $d$-Hausdorff measure at scale $\delta>0$ of $X$. Observe that 
$$m_{\delta}^d(\bigcup\limits_{\alpha\in\mathbb{N}}X_{\alpha})\leq\sum\limits_{\alpha\in\mathbb{N}} m_{\delta}^d(X_{\alpha}).$$

In this context, the Hausdorff dimension of $X$ is 
$$HD(X):=\inf\{d\in[0, 2]: m^d(X)=0\}.$$


\section{The expected Hausdorff dimension of $W^s(\Lambda)$}\label{s.MPY-B}

By Proposition \ref{p.HD-Ws-E+}, the proof of Theorem \ref{t.MPY-A} is reduced to: 

\begin{theorem}\label{t.MPY-B-1} In the setting of Theorem \ref{t.MPY-A}, $HD(\mathcal{E}^+)<1+d_s$.
\end{theorem}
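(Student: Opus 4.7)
The plan is to fix a target dimension $d$ slightly below $1+d_s$ and prove $m^d(\mathcal{E}^+)<\infty$ by estimating, for each admissible tuple $(P_0,\dots,P_k)$, the $d$-Hausdorff measure of the piece $\mathcal{E}^+(P_0,\dots,P_k)$ of the canonical decomposition, and then summing these estimates. The geometric input is the double-exponential decay of widths \eqref{e.Lemma24'}; the analytic input is Lemma \ref{l.HD-maps-scale-r}, used to control the ``parabolic pieces'' produced by repeated pullbacks through the folding map $G$.

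First I would cover the \emph{image} of $\mathcal{E}^+(P_0,\dots,P_k)$ under $g^{n_0+N_0+\dots+N_0+n_k}$, which lies in a very thin horizontal strip inside $Q_k$, by $O(|Q_k|^{-1})$ squares of diameter $|Q_k|$, and then pull this cover back to $\mathcal{E}^+(P_0,\dots,P_k)$, alternating two operations: the inverse of an affine-like iterate $(P_j,Q_j,n_j)$, whose $(\lambda,u_0,v_0)$ cone condition and $2D_0$ distortion estimate (R2) turn each square into an essentially rectangular region of controlled aspect ratio; and the inverse of the folding map $G$, which bends such rectangles into curved parabolic shapes. At each parabolic pullback, instead of crudely enclosing the shape in its bounding rectangle (which gives only $HD(\mathcal{E}^+)<2$, as in \cite{MP}), I would parametrise the shape as the image $f_j(\mathbb{D}^2)$ of a $C^1$ map whose $C^0$-norms $\|Df_j\|_\infty$ and $\|\mathrm{Jac}(f_j)\|_\infty$ are bounded in terms of $|P_j|$, $|Q_j|$ and the transversality margin via the implicit representations furnished by (R2), (R4), (R7). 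Lemma \ref{l.HD-maps-scale-r} then bounds the $d$-Hausdorff measure of $f_j(\mathbb{D}^2)$ by an interpolation of the form $\|Df_j\|_\infty^{2-d}\,\|\mathrm{Jac}(f_j)\|_\infty^{d-1}$, which is strictly smaller than the naive bound $\|Df_j\|_\infty^d$ whenever $d>1$; this extra gain from the Jacobian is exactly what is needed to improve the exponent from $2$ to $1+d_s$.

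Multiplying these local factors for $j=0,\dots,k$ yields an estimate for $m^d(\mathcal{E}^+(P_0,\dots,P_k))$ that is doubly-exponentially small in $k$ thanks to \eqref{e.Lemma24'}. Summing over admissible tuples proceeds in two stages: first over admissible ``children'' at each level, a combinatorial count that couples to the transfer operator $L_d$ and stays controlled as long as $d-1$ lies near the dimension $d_s$ where $\lambda_d=1$; and then over depths $k$. The main obstacle, and the sole place where the hypotheses \eqref{e.Dset'} and \eqref{e.Dset} are used decisively, is the balance of exponents: the Jacobian exponent $d-1$ appearing in the interpolation must be large enough, relative to the parabolic accumulation rate $\beta$, to dominate both the hyperbolic expansion introduced at each level and the combinatorial count from admissible children. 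The hypothesis \eqref{e.Dset'} (automatic in the conservative case, cf.\ Remark~\ref{r.Dset'}) removes the eigenvalue constraint \eqref{e.beta-def-0} and allows $\beta$ to be chosen all the way up to $\beta^*(d_s^0,d_u^0)$, while \eqref{e.Dset}, i.e.\ $\beta^*>5/3$, provides the quantitative slack in the resulting numerical inequality to choose $d$ \emph{strictly} less than $1+d_s$ while keeping the double series summable. Once this numerical check is performed, $m^d(\mathcal{E}^+)<\infty$ for some $d<1+d_s$, whence $HD(\mathcal{E}^+)\leq d<1+d_s$ as required, and Proposition \ref{p.HD-Ws-E+} then upgrades this to $HD(W^s(\Lambda))=1+d_s$.
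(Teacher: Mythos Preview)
Your overall architecture is right: decompose $\mathcal{E}^+$ into the pieces $\mathcal{E}^+(P_0,\dots,P_k)$, cover the forward image by small disks, pull back, and use Lemma~\ref{l.HD-maps-scale-r} to exploit the Jacobian. But the execution diverges from the paper at the central step, and the divergence is a genuine gap.

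You propose to apply Lemma~\ref{l.HD-maps-scale-r} \emph{at each parabolic pullback} to a map $f_j$ encoding one step $G^{-1}\circ \widetilde F_j^{-1}$, and then ``multiply these local factors for $j=0,\dots,k$''. This does not work. For a single step the derivative bound is $\|Df_j\|\lesssim |Q_j|^{-1}$, so the $K^{2-d}$ factor contributes $|Q_j|^{-(2-d)}$ at every level, and the product over $j$ blows up far too fast to be summable. The paper instead applies Lemma~\ref{l.HD-maps-scale-r} \emph{once} to the full inverse $(\widetilde F^{(k)})^{-1}=(G\circ\widetilde F_{k-1}\circ\cdots\circ G\circ\widetilde F_0)^{-1}$ on each disk of the initial cover. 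The point of Lemma~\ref{l.Fk-derivative} is precisely that the derivative of the full composition is \emph{much smaller} than the product of the stepwise derivatives: after one step the image vector aligns with the affine-like horizontal direction, so subsequent steps expand only by $|Q_j|^{-(1+\eta)/2}$ rather than $|Q_j|^{-1}$. This cancellation is what produces the exponent $(2-d)(1+\eta)/2$ on $|Q_{k-2}|\cdots|Q_0|$ in \eqref{e.HD-sk-E+P0dotsPk}, and without it the argument fails.

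Applying the lemma to the full composition requires that $(\widetilde F^{(k)})^{-1}$ be defined on the disks of $\mathcal{O}_k$; this is why the paper needs Proposition~\ref{p.security-margin} (extensions $\widetilde F_j$ with a $C^{-1}|P_j|$ security margin) and the inductive check that successive preimages stay inside the extended strips $\widetilde Q_j$. That check is exactly where the condition $\sum_{\ell\geq1}\widetilde\beta^{-\ell}<3/2$, i.e.\ $\beta^*>5/3$, is \emph{first} used --- not merely, as you write, in the final numerical balance. It reappears a second time in the endgame (case (ii) of the proof) to force $d-1<d_s^0$. Finally, the summation over admissible $(P_0,\dots,P_k)$ is not done via the transfer operator $L_d$: one uses the bound $\leq C|Q_k|^{-C\eta}$ on the number of admissible sequences with fixed $P_0$ and $Q_k$, and then H\"older's inequality splits the double sum into two series over critical $P$'s and critical $Q$'s whose convergence is read off from \cite{PY09}.
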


For the proof of this theorem, we need some facts about the Hausdorff measures of images of maps with bounded geometry.

\subsection{Planar maps with bounded geometry}\label{ss.Hausdorff-measure}
We start with a lemma about the Hausdorff measure at scale $1$ of the image of the unit disk $\mathbb{D}^2:=\{(x,y)\in\mathbb{R}^2: x^2+y^2\leq 1\}$ under a map with bounded geometry: 

\begin{lemma}\label{l.HD-maps-scale-1} Let $K\geq 1$, $L\geq 1$ and $f:\mathbb{D}^2\to\mathbb{R}^2$ be a $C^1$ diffeomorphism onto its image such that $\|Df\|\leq K$ and $|\textrm{Jac}(f)| := |\det Df| \leq L$. Then, there is an universal constant $C$ (e.g., $C=170\pi$) such that, for all $1\leq d\leq 2$, we have 
$$\inf\limits_{\substack{(U_i) \textrm{ covers } f(\mathbb{D}^2), \\ \textrm{diam}(U_i)\leq \sqrt{2} \, \forall\, i}}\sum\limits_{i}\textrm{diam}(U_i)^d\leq C\cdot \max\{K,L\}^{2-d}L^{d-1}.$$
\end{lemma}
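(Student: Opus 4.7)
The plan is to produce, for each $d\in [1,2]$, an explicit covering of $f(\mathbb{D}^{2})$ by congruent axis-aligned squares of a single well-chosen side length $r=r(K,L)\in (0,1]$, and to bound the cost $N(r)\cdot (r\sqrt{2})^{d}$ by \emph{interpolating} between an area-type ($\ell^{2}$) bound governed by $L$ and a perimeter-type ($\ell^{1}$) bound governed by $K$. The underlying mechanism is the elementary identity $N r^{d}= (Nr)^{2-d}(N r^{2})^{d-1}$, valid for any uniform cover by $N$ squares of side $r$: controlling the ``mass'' $Nr$ and the ``area'' $Nr^{2}$ controls the $d$-Hausdorff sum for free, which is precisely the $L^{1}$--$L^{2}$ interpolation alluded to in the introduction.

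First I would assemble the two geometric inputs. Since $\|Df\|\le K$, the map $f$ is $K$-Lipschitz; in particular, $f(\mathbb{D}^{2})$ lies in a disk of radius $K$ and $f(\partial \mathbb{D}^{2})$ is a curve of length at most $2\pi K$. Since $|\mathrm{Jac}(f)|\le L$ and $f$ is a diffeomorphism onto its image, the change of variables formula yields $\mathrm{area}(f(\mathbb{D}^{2}))\le \pi L$. Now fix $r\in (0,1]$ and split the ``active'' cells of the axis-aligned $r$-grid (those meeting $f(\mathbb{D}^{2})$) into interior cells ($Q\subseteq f(\mathbb{D}^{2})$) and boundary cells ($Q\cap f(\partial \mathbb{D}^{2})\neq \emptyset$). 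The interior cells are pairwise disjoint with total area at most $\pi L$, so there are at most $\pi L/r^{2}$ of them. For the boundary cells, I parametrize $f(\partial \mathbb{D}^{2})$ by arclength and cut it into arcs of length $r$: each such arc lies in an $r\times r$ box and hence meets at most $4$ grid cells, so there are at most $C'(K/r+1)$ boundary cells. Summing, one gets $N(r)\le C_{0}(L r^{-2}+K r^{-1}+1)$ for a universal constant $C_{0}$.

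The last step is to optimize by setting $r=\min\{1,L/K\}$. If $L\le K$, then $r=L/K\in(0,1]$ and direct substitution gives
$$N(r)(r\sqrt{2})^{d}\le C_{1}(Lr^{-2}+Kr^{-1})\,r^{d}=C_{1}(Lr^{d-2}+Kr^{d-1})=C_{1}'\,K^{2-d}L^{d-1},$$
which coincides with $\max\{K,L\}^{2-d}L^{d-1}$ since $K\ge L$. If $L>K$, then $r=1$ and the bound gives $N(1)(\sqrt{2})^{d}\le C_{2}(L+K)\le 2C_{2}L=2C_{2}\max\{K,L\}^{2-d}L^{d-1}$ (using $L^{2-d}L^{d-1}=L$ and $K\le L$). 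The one step that deserves real care is the boundary/curve counting estimate together with the bookkeeping of numerical constants needed to recover the explicit value $C=170\pi$ in the statement; everything else is a routine combination of the Lipschitz bound on $f$, the Jacobian-area bound, and the interpolation identity displayed above, and no further feature of the diffeomorphism hypothesis is used beyond injectivity on the boundary circle.
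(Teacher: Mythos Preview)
Your argument is correct. It is, however, a genuinely different route from the paper's.

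The paper builds a \emph{multi-scale} cover: a Whitney-type family of dyadic squares $\mathcal{C}_k$ (at all scales $2^{-k}$) selected by the density condition $\mathrm{area}(Q\cap U)\ge \tfrac15\,\mathrm{area}(Q)$, and then bounds the $d$-sum via the H\"older-type interpolation
\[
\sum_k N_k 2^{-kd}\le\Big(\sum_k N_k 2^{-k}\Big)^{2-d}\Big(\sum_k N_k 2^{-2k}\Big)^{d-1},
\]
estimating the $\ell^2$-sum by the area ($\le CL$) and the $\ell^1$-sum by showing that every selected square at level $k>0$ contains at least $c'\,2^{-k}$ of the boundary curve (hence $\sum_{k>0}N_k2^{-k}\le C\,\mathrm{length}(\partial U)\le C'K$). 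Your approach replaces all of this by a \emph{single} grid at the optimal scale $r=\min\{1,L/K\}$, splitting active cells into ``interior'' (counted by area, $\le \pi L r^{-2}$) and ``boundary'' (counted by arclength, $\le C K r^{-1}$), and then observing that at this particular $r$ both contributions collapse to $K^{2-d}L^{d-1}$ after multiplying by $r^d$. The interpolation identity you quote, $Nr^d=(Nr)^{2-d}(Nr^2)^{d-1}$, is precisely the one-term degeneration of the paper's H\"older step.

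What each buys: your argument is shorter and more elementary (no dyadic bookkeeping, no density threshold, no ``each small square contains a definite amount of boundary'' claim), and in fact yields a smaller universal constant than $170\pi$ if one tracks it. The paper's multi-scale construction, on the other hand, produces a cover whose elements have diameters adapted to the local geometry of $f(\mathbb{D}^2)$ rather than a single uniform scale; this is not needed for the lemma as stated, but is the more robust template when one later rescales (Lemma~\ref{l.HD-maps-scale-r}) and composes such estimates. One small remark on your write-up: the interior/boundary dichotomy for active cells uses that $\partial\big(f(\mathbb{D}^2)\big)=f(\partial\mathbb{D}^2)$, which in turn uses that $f$ is an open map on the open disk (i.e.\ $Df$ is invertible), not merely injectivity on the boundary circle as you suggest in your last sentence.
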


\begin{proof} Fix $0<\varepsilon_0<1$ small enough so that $f$ has an extension to the disk $\mathbb{D}^2_{1+\varepsilon_0} := \{(x,y)\in\mathbb{R}^2: x^2+y^2\leq (1+\varepsilon_0)^2\}$. By a slight abuse of notation, we still denote such an extension by $f$. 

Given $0<\varepsilon<\varepsilon_0$, let $U_{1+\varepsilon}=f(\mathbb{D}^2_{1+\varepsilon})$ and $\partial U_{1+\varepsilon}$ its boundary. For later use, we set $K_{\varepsilon}:= \sup\limits_{\mathbb{D}^2_{1+\varepsilon}}\|Df\|$ and $L_{\varepsilon}:=\sup\limits_{\mathbb{D}^2_{1+\varepsilon}} |\textrm{Jac}(f)|$. For $k\geq 0$ integer, let $\mathcal{Q}_k$ the collection of squares in the plane of side $1/2^k$ and vertices on $\mathbb{Z}^2/2^k$. Let $\mathcal{C}_0^{(\varepsilon)}$ be the set of squares $Q$ in $\mathcal{Q}_0$ such that 
$$\textrm{area}(Q\cap U_{1+\varepsilon})\geq \frac{1}{5}\cdot\textrm{area}(Q).$$
For $k>0$, let $\mathcal{C}_k^{(\varepsilon)}$ be the set of squares $Q$ in $\mathcal{Q}_k$ such that $Q$ is not contained in some $Q'\in\mathcal{C}_{l}^{(\varepsilon)}$, $l<k$, and 
$$\textrm{area}(Q\cap U_{1+\varepsilon})\geq \frac{1}{5}\cdot\textrm{area}(Q).$$

\begin{remark}\label{r.dyadic-construction} In this construction we are implicitly assuming that $U_{1+\varepsilon}=f(\mathbb{D}^2_{1+\varepsilon})$ is not entirely contained in a dyadic square 
$Q\in\bigcup\limits_{k=0}^{\infty}\mathcal{Q}_k$. Of course, there is no loss of generality in this assumption: if $U_{1+\varepsilon}\subset Q$ for some $Q\in\mathcal{Q}_k$, then the Lemma follows from the trivial bound $\textrm{diam}(Q)^d\leq\sqrt{2}^d$.
\end{remark}

Note that $f(\mathbb{D}^2)$ is contained in the interior of $U_{1+\varepsilon}$. In particular, each point of $f(\mathbb{D}^2)$ belongs to some dyadic square contained in $U_{1+\varepsilon}$. Hence, $(U_i^{(\varepsilon)})_{i\in\mathbb{N}}:=\bigcup\limits_{k=0}^{\infty}\mathcal{C}_k^{(\varepsilon)}$ is a covering of $f(\mathbb{D}^2)$ with $\textrm{diam}(U_i^{(\varepsilon)})\leq \sqrt{2}$ and  
$$\sum\limits_{i}\textrm{diam}(U_i^{(\varepsilon)})^d=\sum\limits_{k=0}^{\infty} N_k^{(\varepsilon)}\left(\frac{1}{2^k}\right)^d$$
where $N_k^{(\varepsilon)}:=(\sqrt{2})^d \#\mathcal{C}_k^{(\varepsilon)}$. By thinking this expression as an $L^d$-norm and by applying interpolation between the $L^1$ and $L^2$ norms, we see that 
\begin{equation}\label{e.Ld-interpolation}
\sum\limits_{k=0}^{\infty} N_k^{(\varepsilon)}\left(\frac{1}{2^k}\right)^d\leq \left(\sum\limits_{k=0}^{\infty} \frac{N_k^{(\varepsilon)}}{2^k}\right)^{2-d} \left(\sum\limits_{k=0}^{\infty} \frac{N_k^{(\varepsilon)}}{(2^k)^2}\right)^{d-1}
\end{equation}

We estimate these $L^1$ and $L^2$ norms as follows. First we have 
\begin{eqnarray}\label{e.L2}
\sum\limits_{k}\frac{N_k^{(\varepsilon)}}{(2^k)^2}&=& (\sqrt{2})^d \sum\limits_k \sum\limits_{Q\in\mathcal{C}_k^{(\varepsilon)}}\textrm{area}(Q)\\ \nonumber
&\leq& 5(\sqrt{2})^d\sum\limits_k \sum\limits_{Q\in\mathcal{C}_k^{(\varepsilon)}}\textrm{area}(Q\cap U_{1+\varepsilon})\\ \nonumber
&\leq& 10\, \textrm{area}(U_{1+\varepsilon})\\ \nonumber
&\leq& 10\pi\cdot (1+\varepsilon)^2 L_{\varepsilon}.
\end{eqnarray}
for any $1\leq d\leq 2$. From the previous estimate, we obtain that $N_0^{(\varepsilon)}\leq 10\pi(1+\varepsilon)^2\, L_{\varepsilon}$. 

On the other hand, we \emph{claim} that there exists an universal constant $c'>0$ (e.g., $c'=1/20$) such that for any $k>0$ and $Q\in\mathcal{C}_k$ we have
$$\textrm{length}(\partial U_{1+\varepsilon}\cap Q)\geq c'/2^k$$ 
This claim implies 
\begin{eqnarray*}
\sum\limits_{k>0}\frac{N_k^{(\varepsilon)}}{2^k}&=& (\sqrt{2})^d \sum\limits_{k>0} \sum\limits_{Q\in\mathcal{C}_k^{(\varepsilon)}}\frac{1}{2^k}\\
&\leq& (\sqrt{2})^d c'^{-1}\sum\limits_{k>0} \sum\limits_{Q\in\mathcal{C}_k^{(\varepsilon)}} \textrm{length}(\partial U_{1+\varepsilon}\cap Q) \\
&\leq& 2 (\sqrt{2})^d c'^{-1}\textrm{length}(\partial U_{1+\varepsilon}) \\
&\leq& 8\pi c'^{-1}(1+\varepsilon)K_{\varepsilon}.
\end{eqnarray*}
for any $1\leq d\leq 2$. Hence, 
\begin{eqnarray}\label{e.L1}
&\sum\limits_{k\geq0}\frac{N_k^{(\varepsilon)}}{2^k} = N_0^{(\varepsilon)}+\sum\limits_{k>0}\frac{N_k^{(\varepsilon)}}{2^k} \leq 10\pi (1+\varepsilon)^2 L_{\varepsilon}+8\pi c'^{-1}(1+\varepsilon)K_{\varepsilon} \\ 
&\leq (10\pi+8\pi c'^{-1})(1+\varepsilon)^2\max\{K_{\varepsilon}, L_{\varepsilon}\} \nonumber
\end{eqnarray}

Thus, in view of \eqref{e.L1}, \eqref{e.L2} and \eqref{e.Ld-interpolation}, since $0<\varepsilon<\varepsilon_0$ is arbitrary and $K_0:=\lim\limits_{\varepsilon\to 0} K_{\varepsilon}$, $L_0:=\lim\limits_{\varepsilon\to 0} L_{\varepsilon}$ satisfy $K_0\leq K$, $L_0\leq L$, the Lemma follows (with $C=170\pi$ when $c'=1/20$) once we prove the claim.

To show the claim we observe that if $\textrm{length}(\partial U_{1+\varepsilon}\cap Q)<c'/2^k$, then $\partial U_{1+\varepsilon}\cap Q$ is contained in a $c'/2^k$-neighborhood of $\partial Q$ (thanks to Remark \ref{r.dyadic-construction}). So, the complement of this neighborhood (whose area is $(1-2c')^2\cdot\textrm{area}(Q)$) is either contained in $U_{1+\varepsilon}$ or disjoint from $U_{1+\varepsilon}$. This contradicts the definition of $\mathcal{C}_k^{(\varepsilon)}$ if $c'>0$ is small enough (e.g., $c'=1/20$). 
\end{proof}

After scaling, we obtain the following version of the previous lemma:

\begin{lemma}\label{l.HD-maps-scale-r}Let $K\geq L\geq 1$ and $f:\mathbb{D}^2_r\to\mathbb{R}^2$ be a $C^1$ diffeomorphism from $\mathbb{D}^2_r:=\{(x,y)\in\mathbb{R}^2: x^2+y^2\leq r^2\}$ on its image such that $\|Df\|\leq K$ and $|\textrm{Jac}(f)|\leq L$. Then, there is an universal constant $C$ (e.g., $C=170\pi$) such that, for all $1\leq d\leq 2$, we have 
$$\inf\limits_{\substack{(U_i) \textrm{ covers } f(\mathbb{D}^2_r), \\ \textrm{diam}(U_i)\leq \frac{Lr\sqrt{2}}{K} \,\forall\, i}}\sum\limits_{i}\textrm{diam}(U_i)^d\leq C\cdot r^d\cdot K^{2-d}\cdot L^{d-1}.$$
\end{lemma}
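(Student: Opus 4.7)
The natural strategy is to reduce Lemma \ref{l.HD-maps-scale-r} to Lemma \ref{l.HD-maps-scale-1} via a suitable rescaling that turns $f : \mathbb{D}^2_r \to \mathbb{R}^2$ into a map from the unit disk $\mathbb{D}^2$ whose derivative and Jacobian bounds satisfy the hypotheses of the previous lemma, and such that the covers provided by that lemma, once pulled back, respect the diameter constraint $\textrm{diam}(U_i) \leq Lr\sqrt{2}/K$.

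First I would define $g : \mathbb{D}^2 \to \mathbb{R}^2$ by $g(x) := \alpha\, f(rx)$, where the scalar $\alpha > 0$ is to be chosen. By the chain rule, $\|Dg\| \leq \alpha r K$ and $|\textrm{Jac}(g)| \leq (\alpha r)^2 L$. Moreover, any cover $(V_i)$ of $g(\mathbb{D}^2)$ pulls back to a cover $(U_i) := (\alpha^{-1} V_i)$ of $f(\mathbb{D}^2_r)$ with $\textrm{diam}(U_i) = \alpha^{-1}\,\textrm{diam}(V_i)$. The diameter constraint $\textrm{diam}(U_i) \leq Lr\sqrt{2}/K$ matches the constraint $\textrm{diam}(V_i) \leq \sqrt{2}$ from Lemma \ref{l.HD-maps-scale-1} precisely when $\alpha^{-1}\sqrt{2} \leq Lr\sqrt{2}/K$, which forces the choice $\alpha := K/(Lr)$.

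With this choice, both $\|Dg\|$ and $|\textrm{Jac}(g)|$ are bounded by $K^2/L$, which is at least $1$ thanks to the assumption $K \geq L \geq 1$. Lemma \ref{l.HD-maps-scale-1} applied with parameters $K' = L' = K^2/L$ then produces a cover of $g(\mathbb{D}^2)$ with
\[
\sum_i \textrm{diam}(V_i)^d \leq C\cdot\bigl(K^2/L\bigr)^{2-d}\bigl(K^2/L\bigr)^{d-1} = C\cdot K^2/L.
\]
Pulling this cover back by $\alpha^{-1}$ yields
\[
\sum_i \textrm{diam}(U_i)^d = \alpha^{-d}\sum_i \textrm{diam}(V_i)^d \leq (Lr/K)^d \cdot C K^2/L = C \cdot r^d\, K^{2-d}\, L^{d-1},
\]
which is exactly the desired estimate.

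Since the argument is pure dimensional analysis, I do not expect any substantial obstacle beyond checking the bookkeeping; the real content is already encoded in Lemma \ref{l.HD-maps-scale-1}. The only delicate point is that the diameter constraint forces the choice of $\alpha$, and this forced choice is precisely what makes the quantity $K^2/L$ play simultaneously the role of both $K'$ and $L'$ in the target lemma's hypotheses, which is why the assumption $K \geq L \geq 1$ (rather than just $K, L \geq 1$) is required for the rescaled map to fit the framework.
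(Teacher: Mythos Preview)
Your proof is correct and is precisely the scaling argument the paper alludes to when it introduces Lemma \ref{l.HD-maps-scale-r} with the phrase ``After scaling, we obtain the following version of the previous lemma''; the paper gives no further details, and your computation fills them in exactly as intended.
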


\subsection{Application of Lemma \ref{l.HD-maps-scale-r} to the proof of Theorem \ref{t.MPY-B-1}} Consider again the decomposition 
$$\mathcal{E}^+=\bigcup\limits_{(P_0,\dots,P_k) \textrm{ admissible }} \mathcal{E}^+(P_0,\dots,P_k)$$
and let us estimate $m_{s_k}^d(\mathcal{E}^+(P_0,\dots,P_k))$. For this sake, recall that the admissibility condition on $(P_i, Q_i, n_i)$, $i=0,\dots, k$ implies that 
$$g^{n_0+N_0+\dots+n_{k-1}+N_0}(\mathcal{E}^+(P_0,\dots,P_k))$$
is contained in a rectangular region of width $C|Q_k|^{\frac{(1-\eta)}{2}}|P_k|$ and height $C|Q_{k-1}|^{\frac{(1-\eta)}{2}}$ (cf. the proof of Proposition 62 of \cite{PY09} and the beginning of the proof of Lemma 3.2 of \cite{MP}). 

In order to alleviate the notations, we denote $g^{n_i}|_{P_i}:=F_i$, $G:=g^{N_0}|_{L_u}$, $F^{(k)}:=G\circ F_{k-1}\circ \dots \circ G \circ F_0$, and we write $\delta_j:=|Q_j|$ for $j=0,\dots,k-1$. In this language, we have that 
$$F^{(k)}(\mathcal{E}^+(P_0,\dots,P_k))=g^{n_0+N_0+\dots+n_{k-1}+N_0}(\mathcal{E}^+(P_0,\dots,P_k))$$ 
is contained in a  rectangular region of width $C|Q_k|^{\frac{(1-\eta)}{2}}|P_k|$ and height $C\delta_{k-1}^{\frac{(1-\eta)}{2}}$. Let us cover this rectangular region into $N_k:= C^2 \frac{\delta_{k-1}^{(1-\eta)/2}}{|Q_k|^{\frac{(1-\eta)}{2}}|P_k|}$ disks of diameters $C|Q_k|^{\frac{(1-\eta)}{2}}|P_k|$, and let us denote by $\mathcal{O}_k$ the subcollection of such disks intersecting $F^{(k)}(\mathcal{E}^+(P_0,\dots,P_k))$.

Recall that Proposition \ref{p.security-margin} says that the affine-like iterate $F_i$ can be extended to an affine-like iterate $\widetilde{F}_i$ with domain $\widetilde{P}_i$ in such a way that $|\widetilde{P}_i|\leq C|P_i|$ and a $C^{-1}|P_i|$-neighborhood of $P_i$ is included in the domain $\widetilde{P}_i$ of $F_i$. Given a square $S\in\mathcal{O}_k$, we have that its pre-image under $G$ contains a point of $F^{(k)}(\mathcal{E}^+(P_0,\dots,P_k))$ and its diameter is $C|Q_k|^{\frac{(1-\eta)}{2}}|P_k|$. Since $\max\{|P_k|, |Q_k|\}\leq C|Q_{k-1}|^{\widetilde{\beta}}$ with $\widetilde{\beta}>1$ (cf. \eqref{e.Lemma24}), the pre-image of $S$ under $G$ is contained in a $C^{-1}|Q_{k-1}|$-neighborhood of $Q_{k-1}$, and, hence, it is contained in $\widetilde{Q}_{k-1}$. Therefore, the pre-image of $S$ under $G\circ \widetilde{F}_{k-1}$ contains a point of $F^{(k-1)}(\mathcal{E}^+(P_0,\dots,P_k))$ and its diameter is $\leq C\frac{|Q_k|^{\frac{(1-\eta)}{2}}|P_k|}{|Q_{k-1}|}$. Hence, the pre-image of $S$ under $G\circ \widetilde{F}_{k-1}\circ G$ is contained in a $C^{-1}|Q_{k-2}|$-neighborhood of $Q_{k-2}$ and, \emph{a fortiori}, in $\widetilde{Q}_{k-2}$  whenever 
$$C\frac{|Q_k|^{\frac{(1-\eta)}{2}}|P_k|}{|Q_{k-1}|}\leq C^{-1} |Q_{k-2}|$$
Since $\max\{|P_j|, |Q_j|\}\leq |Q_{j-1}|^{\widetilde{\beta}}$, the inequality above holds when 
$$\widetilde{\beta}\left(\frac{(3-\eta)}{2}\widetilde{\beta}-1\right) > 1, \textrm{ i.e., } \frac{(3-\eta)}{2} > \frac{1}{\widetilde{\beta}} + \frac{1}{\widetilde{\beta}^2}$$
In this case, the pre-image of $S$ under $G\circ\widetilde{F}_{k-1}\circ G\circ \widetilde{F}_{k-2}$ contains a point of $F^{(k-2)}(\mathcal{E}^+(P_0,\dots,P_k))$ and its diameter is $\leq C\frac{|Q_k|^{\frac{(1-\eta)}{2}}|P_k|}{|Q_{k-1}|\,|Q_{k-2}|}$. By induction, the pre-image of $S$ under $G\circ \widetilde{F}_{k-1}\circ G\circ\dots\circ\widetilde{F}_{j+1}\circ G$ is contained in a $C^{-1}|Q_j|$ of $Q_j$, and, \emph{a fortiori}, in $\widetilde{Q}_j$, whenever 
$$C^{k-j}\frac{|Q_k|^{\frac{(1-\eta)}{2}}|P_k|}{|Q_{k-1}|\dots |Q_{j+1}|}\leq |Q_j|$$
Since $\max\{|P_{\ell}|, |Q_{\ell}|\}\leq |Q_{\ell-1}|^{\widetilde{\beta}}$, the inequality above holds when 
$$\frac{(3-\eta)}{2} > \frac{1}{\widetilde{\beta}} + \dots+\frac{1}{\widetilde{\beta}^{k-j}}$$
In this case, the pre-image of $S$ under $G\circ \widetilde{F}_{k-1}\circ\dots\circ G\circ\widetilde{F}_{j}$ contains a point of $F^{(j)}(\mathcal{E}^+(P_0,\dots,P_k))$ and its diameter is $\leq C\frac{|Q_k|^{\frac{(1-\eta)}{2}}|P_k|}{|Q_{k-1}|\dots|Q_{j}|}$. 

In particular, we have that $\mathcal{E}^+(P_0,\dots,P_k)$ is covered by the pre-images under $\widetilde{F}^{(k)}:=G\circ \widetilde{F}_{k-1}\circ\dots\circ G\circ\widetilde{F}_{0}$ of the disks in $\mathcal{O}_k$ \emph{whenever} we can take $\widetilde{\beta}$ with $\sum\limits_{\ell=1}^{\infty} \widetilde{\beta}^{-\ell}<3/2$, that is, $\widetilde{\beta}>5/3$. Observe that, from the definitions, such a choice is possible if the quantity $\beta$ in \eqref{e.beta-def-0} and \eqref{e.beta-def} satisfies $\beta>5/3$. Since the assumption \eqref{e.Dset'} in Theorem \ref{t.MPY-A} says that the constraint \eqref{e.beta-def-0} is superfluous, $\beta$ can be taken arbitrarily close to  
$$\beta^*:=\beta^*(d_s^0,d_u^0) = \frac{(1-\min\{d_s^0, d_u^0\})(d_s^0+d_u^0)}{\max\{d_s^0, d_u^0\}(\max\{d_s^0, d_u^0\} + d_s^0+d_u^0-1)}$$ 
and, hence, the property $\beta>5/3$ is ensured by the hypothesis \eqref{e.Dset}. 

Our plan to estimate $m^d_{s_k}(\mathcal{E}^+(P_0,\dots,P_k))$ is to apply Lemma \ref{l.HD-maps-scale-r} to the image of each of these disks 
under the map $(\widetilde{F}^{(k)})^{-1}$. Therefore, let us estimate the Lipschitz constant and the Jacobian of this map on these squares.

\begin{lemma}\label{l.Fk-Jacobian} On the disks of the collection $\mathcal{O}_k$, one has 
$$|\textrm{Jac}((\widetilde{F}^{(k)})^{-1})|\leq C^{k}\prod\limits_{i=0}^{k-1}\frac{|P_i|}{|Q_i|}\leq C^k|P_0|\delta_{k-1}^{-1}:=L_k.$$
\end{lemma}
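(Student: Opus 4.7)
The plan is to apply the chain rule to
$\widetilde{F}^{(k)} = G\circ \widetilde{F}_{k-1}\circ G\circ\cdots\circ G\circ \widetilde{F}_0$,
estimate the Jacobian of each of the $2k$ factors separately, and then telescope the resulting product using the admissibility estimate \eqref{e.Lemma24}. For each extended affine-like iterate $\widetilde{F}_i$, I would pass to its implicit representation $(A_i,B_i)$: a short implicit-differentiation calculation (differentiating $x_0=A_i(y_0,x_1)$ and $y_1=B_i(y_0,x_1)$ with respect to $(x_0,y_0)$) yields the identity $\det D\widetilde{F}_i=(B_i)_y/(A_i)_x$. Combined with Remark \ref{r.width-def.}, which says that $|P_i|=\max|(A_i)_x|$ and $|Q_i|=\max|(B_i)_y|$ are comparable to the corresponding minima up to the constant $C=C(g_0)$, this gives $|\textrm{Jac}(\widetilde{F}_i^{-1})|\leq C\,|P_i|/|Q_i|$. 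Since $G=g^{N_0}|_{L_u}$ is a \emph{fixed} smooth diffeomorphism depending only on $g_0$, its Jacobian is uniformly bounded by a constant $C=C(g_0)$. The chain rule applied across the $k$ affine-like factors and the $k$ folding factors of $(\widetilde{F}^{(k)})^{-1}$ then produces the first inequality $|\textrm{Jac}((\widetilde{F}^{(k)})^{-1})|\leq C^{k}\prod_{i=0}^{k-1}|P_i|/|Q_i|$ (the $C^{2k}$ from the two separate products being absorbed into a single $C^k$ by redefining the constant).

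For the second inequality I would rearrange
$$\prod_{i=0}^{k-1}\frac{|P_i|}{|Q_i|}=\frac{|P_0|}{|Q_{k-1}|}\cdot\prod_{i=1}^{k-1}\frac{|P_i|}{|Q_{i-1}|}$$
and observe that each interior ratio $|P_i|/|Q_{i-1}|$ is at most $C|Q_{i-1}|^{\widetilde{\beta}-1}$ by \eqref{e.Lemma24}. Since $\widetilde{\beta}>1$ and the widths $|Q_{i-1}|$ decay doubly exponentially by \eqref{e.Lemma24'}, each such factor is $\leq 1$ as soon as $\varepsilon_0$ is small, and the residual $C^{k-1}$ is absorbed into the overall $C^k$. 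This produces the desired bound $|\textrm{Jac}((\widetilde{F}^{(k)})^{-1})|\leq C^{k}|P_0|/\delta_{k-1}=L_k$.

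The only point requiring care is that the chain-rule computation must take place on a legitimate domain: the successive pre-images of each disk in $\mathcal{O}_k$ under $G$ and $\widetilde{F}_j$ must actually lie inside the corresponding extended strips $\widetilde{Q}_j$ and $\widetilde{P}_j$, so that one may differentiate the extensions there and so that the cone and distortion bounds from Proposition \ref{p.security-margin} are available. This nesting is exactly what was verified in the discussion immediately preceding the lemma (using the $C^{-1}|Q_j|$-neighborhood property of the extensions together with the hypothesis $\widetilde{\beta}>5/3$), so I can take it as given. Beyond this book-keeping, the computation is essentially routine; the real content of the lemma is the combinatorial telescoping of width ratios that collapses the product into the clean bound $|P_0|/\delta_{k-1}$, which is what later feeds into Lemma \ref{l.HD-maps-scale-r}.
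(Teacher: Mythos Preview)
Your proposal is correct and follows essentially the same approach as the paper: the paper likewise computes the Jacobian of an affine-like map via its implicit representation as $A_x^{-1}B_y$ (hence $|\textrm{Jac}(\widetilde{F}_i^{-1})|\leq C|P_i|/|Q_i|$ via Remark~\ref{r.width-def.}), absorbs the bounded Jacobian of the fixed folding map $G$ into the constant, and then telescopes the product using $|P_i|\leq C|Q_{i-1}|^{\widetilde{\beta}}$ from \eqref{e.Lemma24}. Your added remarks on the implicit-differentiation identity and on the domain nesting (handled in the paper just before the lemma) are accurate elaborations of steps the paper leaves implicit.
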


\begin{proof} The Jacobian determinant of an affine-like map from a vertical strip $P$ to a horizontal strip $Q$ with implicit representation $(A,B)$ is 
$$C^{-1}|P|/|Q|\leq A_x^{-1}B_y\leq C|P|/|Q|$$
(see Remark \ref{r.width-def.}). 

By definition, $(\widetilde{F}^{(k)})^{-1}=(G\circ \widetilde{F}_{k-1}\circ\dots\circ G\circ\widetilde{F}_{0})^{-1}$ where $G=g^{N_0}$ is the folding map (a fixed map with uniformly bounded Jacobian) and $\widetilde{F}_i$ are the affine-like maps $g^{n_i}|_{\widetilde{P}_i}:\widetilde{P}_i\to \widetilde{Q}_i$ with $|\widetilde{P}_i|\leq C|P_i|$ and $|\widetilde{Q}_i|\leq C|Q_i|$. Therefore, 
$$|\textrm{Jac}((\widetilde{F}^{(k)})^{-1})|=|\textrm{Jac}((G\circ \widetilde{F}_{k-1}\circ\dots\circ G\circ\widetilde{F}_{0})^{-1})|\leq C^k\prod\limits_{i=0}^{k-1}\frac{|P_i|}{|Q_i|}.$$
Since $|P_i|\leq C |Q_{i-1}|^{\widetilde{\beta}}$ with $\widetilde{\beta}>1$ (cf. \eqref{e.Lemma24}), it follows that
$$|\textrm{Jac}((\widetilde{F}^{(k)})^{-1})|\leq C^k\prod\limits_{i=0}^{k-1}\frac{|P_i|}{|Q_i|}\leq C^k\frac{|P_0|}{|Q_{k-1}|}:=C^k|P_0|\delta_{k-1}^{-1}.$$
This proves the lemma.
\end{proof}

\begin{lemma}\label{l.Fk-derivative} On the disks of the collection $\mathcal{O}_k$, one has 
$$\|D(\widetilde{F}^{(k)})^{-1}\|\leq C^k\delta_{k-1}^{-1}(\delta_{k-2}\dots\delta_0)^{-(1+\eta)/2}:=K_k.$$
\end{lemma}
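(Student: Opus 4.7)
The plan is to apply the chain rule to the decomposition
$$(\widetilde F^{(k)})^{-1}=\widetilde F_0^{-1}\circ G^{-1}\circ \widetilde F_1^{-1}\circ G^{-1}\circ \cdots \circ \widetilde F_{k-1}^{-1}\circ G^{-1},$$
bounding the $k$ affine-like factors and the $k$ folding factors separately at their respective intermediate points. For the affine-like pieces, Proposition \ref{p.security-margin} provides extensions $\widetilde F_i$ satisfying the $(\lambda,u_0,v_0)$ cone condition and a $3D_0$ distortion condition, so the explicit formula for $D\widetilde F_i^{-1}$ in terms of the implicit representation $(A_i,B_i)$ together with $|B_y|\asymp |Q_i|$ on $\widetilde Q_i$ (Remark \ref{r.width-def.}) and the universal bounds $|A_y|,|B_x|\leq C$ yields $\|D\widetilde F_i^{-1}\|\leq C/|Q_i|=C/\delta_i$. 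For the folding pieces, $G=g_t^{N_0}|_{L_u}:L_u\to L_s$ is a diffeomorphism between the lenticular regions, so $\|DG^{-1}\|$ is bounded uniformly on $L_s$ by a constant depending only on $g_0$.

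The naive chain-rule multiplication of these bounds would give $\|D(\widetilde F^{(k)})^{-1}\|\leq C^{2k}(\delta_0\cdots\delta_{k-1})^{-1}$, which exceeds the advertised $K_k$ by a factor $\prod_{i=0}^{k-2}\delta_i^{-(1-\eta)/2}$. The bulk of the proof consists in extracting this saving by a careful directional analysis: at each intermediate point of the orbit, the tangent vector propagated by successive applications of $DG^{-1}$ lies in a cone that is transverse, by the transversality condition (R7) (which gives $\delta(Q_{k-j},P_{k-j+1})\geq C^{-1}|Q_{k-j}|^{1-\eta}$), to the strongly expanding eigendirection of the next $D\widetilde F_{k-j}^{-1}$. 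Quantifying this transversality by projecting the propagated vector onto the eigenframe of $D\widetilde F_{k-j}^{-1}$ yields the sharp bound $C|Q_{k-j}|^{-(1+\eta)/2}$ for the norm of the propagated action of the $j$-th affine-like factor ($2\leq j\leq k$), in place of the naive $C|Q_{k-j}|^{-1}$.

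The innermost pair $\widetilde F_{k-1}^{-1}\circ G^{-1}$ requires a slightly different treatment: the initial point $w$ lies in the small disk $S\in\mathcal O_k$ of diameter $C|Q_k|^{(1-\eta)/2}|P_k|$, and the admissibility-driven bound $|P_k|\leq C|Q_{k-1}|^{\widetilde\beta}$ from \eqref{e.Lemma24} causes this combined factor to collapse to $C\delta_{k-1}^{-1}$ rather than $C\delta_{k-1}^{-(3-\eta)/2}$. Multiplying the sharpened bounds across the $k$ composition steps and relabelling indices then gives
$$\|D(\widetilde F^{(k)})^{-1}\|\leq C^k\,\delta_{k-1}^{-1}\,(\delta_{k-2}\cdots\delta_0)^{-(1+\eta)/2}=K_k.$$

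I expect the main obstacle to be the directional bookkeeping at each parabolic composition: one must show that the tangent vector emerging from $DG^{-1}$ consistently lies in a cone that avoids the strongly expanding direction of the next $D\widetilde F_i^{-1}$ up to an angle bounded below by $C|Q_i|^{1-\eta}$, using the transversality condition (R7) in an essential way. Once this quantitative transversality is in place, the lemma follows from the direct chain-rule multiplication of the factor bounds.
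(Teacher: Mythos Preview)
Your approach is essentially the paper's: propagate a unit tangent vector backwards through the composition, and show that after the first affine-like factor the vector becomes nearly horizontal (in the affine-like sense) so that each subsequent factor $\widetilde F_{k-j}^{-1}$ expands by only $C\delta_{k-j}^{-(1+\eta)/2}$ instead of the naive $C\delta_{k-j}^{-1}$. The paper organizes this as an explicit induction on $j$ tracking both $\|u_{k-j}\|$ and the angle of $v_{k-j}$ with the horizontal direction, splitting into three cases and invoking the computations on p.~192 of \cite{PY09} for the angle bound; your ``directional bookkeeping via (R7)'' is the same mechanism described more informally.

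Two points of confusion in your write-up should be fixed. First, your treatment of the innermost pair is mis-reasoned: the bound $C\delta_{k-1}^{-1}$ for $\widetilde F_{k-1}^{-1}\circ G^{-1}$ is simply the naive operator-norm bound ($\|DG^{-1}\|\leq C$ uniformly on $L_s$, and $\|D\widetilde F_{k-1}^{-1}\|\leq C\delta_{k-1}^{-1}$), valid because the initial unit vector carries no prior alignment. The size of the disk $S$ and the inequality $|P_k|\leq C|Q_{k-1}|^{\widetilde\beta}$ play no role here, and there is no ``$\delta_{k-1}^{-(3-\eta)/2}$'' to collapse from. Second, the alignment statement for subsequent steps needs sharpening: it is not enough that the propagated vector be \emph{transverse} to the expanding (vertical) direction of $\widetilde F_{k-j}^{-1}$; one needs that its angle with the \emph{horizontal} direction in $\widetilde Q_{k-j}$ is at most $C|Q_{k-j}|^{(1-\eta)/2}$. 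This quantitative smallness (not mere transversality) is what converts $\delta_{k-j}^{-1}$ into $\delta_{k-j}^{-(1+\eta)/2}$, and it is produced dynamically---by the preceding affine-like step forcing the output to be nearly vertical in $\widetilde P_{k-j+1}$, followed by $G^{-1}$ and the (R7) lower bound $\delta(Q_{k-j},P_{k-j+1})\geq C^{-1}|Q_{k-j}|^{1-\eta}$---rather than being an a priori cone condition on $DG^{-1}$ alone.
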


\begin{proof} Let $u_k$ be a unit vector at a point $x_k$ of a disk in $\mathcal{O}_k$. We define inductively 
$$y_{j-1}=G^{-1}(x_j), \quad x_{j}=\widetilde{F}_j^{-1}(y_j)$$ 
and 
$$v_{j-1}=DG^{-1}(x_j)u_j, \quad u_j=D\widetilde{F}_j^{-1}(y_j)v_j.$$
Observe that $\|v_{k-1}\|\sim 1$. 

Given an affine-like map $F:P\to Q$, the vector field on $Q$ obtained by pushing forward by $F$ the horizontal direction on $P$ is called the \emph{horizontal direction in the affine-like sense}. 

We will prove by induction on $j$ that the following two facts: $$\|u_{k-j}\|\leq C^{j}\delta_{k-1}^{-1}(\delta_{k-2}\dots\delta_{k-j})^{-(1+\eta)/2},$$
and, moreover, if the angle of $v_{k-j}$ with the horizontal direction in the affine-like sense is at most $\delta_{k-j}^{1/2}$, one has 
$$\|u_{k-j}\|\leq C^j(\delta_{k-1}\dots\delta_0)^{-(1+\eta)/2}.$$

For this sake, we consider three cases: 
\begin{itemize}
\item $\|u_{k-j}\|\leq1$: this means that the angle of $v_{k-j}$ with the horizontal direction in the affine-like sense is $\leq C |Q_{k-1}|$; in this case, the estimate follows by induction on $j$.
\item $\|u_{k-j}\|>1$: in this case, we have 
$$\|v_{k-j-1}\|\sim\|u_{k-j}\|\leq C|Q_{k-j}|^{-1}\|u_{k-j+1}\|$$
and the angle of $v_{k-j-2}$ with the horizontal direction in the affine-like sense is at most $|Q_{k-j-1}|^{(1-\eta)/2}$ (compare with the calculations at page 192 of \cite{PY09}).
\item $\|u_{k-j}\|>1$ and the angle of $v_{k-j-1}$ with the ``horizontal'' direction is $\leq C|Q_{k-j-1}|^{(1-\eta)/2}$. In this case, we have 
$$\|v_{k-j-2}\|\sim\|u_{k-j-1}\|\leq C|Q_{k-j-1}|^{-(1+\eta)/2}\|u_{k-j}\|$$
\end{itemize}
Since $\|u_k\|=1$ and $\|v_{k-1}\|\sim 1$, this completes the argument. 
\end{proof}

By plugging Lemmas \ref{l.Fk-Jacobian} and \ref{l.Fk-derivative} into Lemma~\ref{l.HD-maps-scale-r} for each of the squares $Q\in\mathcal{O}_k$, we obtain 
$$m_{s_{k}}^{d}((\widetilde{F}^{(k)})^{-1}(Q))\leq C r_k^d\cdot K_k^{2-d}\cdot L_k^{d-1}$$ 
where $1\leq d\leq 2$, $K_k=C^k\delta_{k-1}^{-1}(\delta_{k-2}\dots\delta_0)^{-(1+\eta)/2}$, $L_k=C^k|P_0|\delta_{k-1}^{-1}$, $r_k=C|Q_{k}|^{\frac{(1-\eta)}{2}}|P_k|$, and 
$s_k=L_k r_k/K_k$. This gives 
$$m_{s_{k}}^{d}(\mathcal{E}^+(P_0,\dots,P_k))\leq C^k N_k\cdot r_k^d\cdot K_k^{2-d}\cdot L_{k}^{d-1},$$
where $N_k = C^2\delta_{k-1}^{(1-\eta)/2}/r_k$. This estimate can be rewritten as 
\begin{equation}\label{e.HD-sk-E+P0dotsPk} 
m_{s_{k}}^{d}(\mathcal{E}^+(P_0,\dots,P_k))\leq \frac{C^k|P_0|^{d-1}|P_k|^{d-1}|Q_k|^{(d-1)(1-\eta)/2}}{|Q_{k-1}|^{(1+\eta)/2}(|Q_{k-2}|\dots|Q_0|)^{(2-d)(1+\eta)/2}}
\end{equation}

At this point, it is useful to recall that $\max\{|P_j|,|Q_j|\}\leq C|Q_{j+1}|^{\widetilde{\beta}}$ for $j\geq 0$ (cf. \eqref{e.Lemma24}), where $\widetilde{\beta}=\beta(1-\eta)(1+\tau)^{-1}$ is close to the parameter $\beta$ satisfying the constraints \eqref{e.beta-def-0} and \eqref{e.beta-def}. Furthermore, the assumption \eqref{e.Dset'} in Theorem \ref{t.MPY-A} says that the constraint \eqref{e.beta-def-0} is superfluous, so that we can take $\beta$ arbitrarily close to  
$$\beta^*:=\beta^*(d_s^0,d_u^0) = \frac{(1-\min\{d_s^0, d_u^0\})(d_s^0+d_u^0)}{\max\{d_s^0, d_u^0\}(\max\{d_s^0, d_u^0\} + d_s^0+d_u^0-1)}$$

From these facts, we can use \eqref{e.HD-sk-E+P0dotsPk} to prove the following lemma:

\begin{lemma}\label{l.HD-sk-E+P0dotsPk} For an appropriate choice of $d=1+d_s^0-o(1)$, one has 
\begin{itemize}
\item[(a)] $m_{s_{k}}^{d}(\mathcal{E}^+(P_0,\dots,P_k))\leq C^k |P_0|^{d-1}|Q_k|^{\frac{(d-1)(1-\eta)}{2}}$ whenever 
$\beta^* \cdot d_s^0 > \frac{1}{2} + \frac{1-d_s^0}{2(\beta^*-1)}$; 
\item[(b)] $m_{s_{k}}^{d}(\mathcal{E}^+(P_0,\dots,P_k))\leq C^k |P_0|^{d-1}|Q_k|^{\frac{(d-1)(3-\eta)}{2}-\frac{1}{2\widetilde{\beta}} - \frac{(2-d)}{2\widetilde{\beta}(\widetilde{\beta}-1)}}$ whenever $\beta^* \cdot d_s^0 \leq \frac{1}{2} + \frac{1-d_s^0}{2(\beta^*-1)}$. 
\end{itemize}
\end{lemma}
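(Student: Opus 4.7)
The plan is to derive both (a) and (b) from a single intermediate estimate obtained from \eqref{e.HD-sk-E+P0dotsPk} by eliminating every width except $|Q_{k-1}|$ and $|Q_k|$, using only the doubly exponential decay \eqref{e.Lemma24}. Concretely, I would feed in $|P_k|\leq C|Q_{k-1}|^{\widetilde{\beta}}$ and iterate $|Q_{j+1}|\leq C|Q_j|^{\widetilde{\beta}}$ backwards to get $|Q_j|\geq C^{-O(1)}|Q_{k-1}|^{1/\widetilde{\beta}^{k-1-j}}$ for $0\leq j\leq k-2$. Summing the geometric series $\sum_{\ell\geq 1}\widetilde{\beta}^{-\ell}=1/(\widetilde{\beta}-1)$ collapses the long product in the denominator of \eqref{e.HD-sk-E+P0dotsPk} into
\[
(|Q_{k-2}|\cdots|Q_0|)^{-(2-d)(1+\eta)/2}\leq C^k|Q_{k-1}|^{-(2-d)(1+\eta)/(2(\widetilde{\beta}-1))},
\]
and plugging this back into \eqref{e.HD-sk-E+P0dotsPk} produces the intermediate bound
\[
m_{s_k}^{d}(\mathcal{E}^+(P_0,\dots,P_k))\leq C^k|P_0|^{d-1}|Q_k|^{(d-1)(1-\eta)/2}|Q_{k-1}|^{\mu(d)},
\]
where $\mu(d):=(d-1)\widetilde{\beta}-\frac{1+\eta}{2}-\frac{(2-d)(1+\eta)}{2(\widetilde{\beta}-1)}$.

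For case (a), the hypothesis $\beta^*d_s^0>\frac{1}{2}+\frac{1-d_s^0}{2(\beta^*-1)}$ reads, after taking $d=1+d_s^0-o(1)$ and $\widetilde{\beta}$ arbitrarily close to $\beta^*$, exactly as $\mu(d)\geq 0$; hence $|Q_{k-1}|^{\mu(d)}\leq 1$ and the bound in (a) drops out. For case (b), the opposite inequality gives $\mu(d)<0$, and I would apply $|Q_{k-1}|\geq C^{-1}|Q_k|^{1/\widetilde{\beta}}$ (once more from \eqref{e.Lemma24}) to trade the negative power of $|Q_{k-1}|$ for a negative power of $|Q_k|$. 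A short computation rewrites the resulting exponent of $|Q_k|$ as
\[
\frac{(d-1)(1-\eta)}{2}+\frac{\mu(d)}{\widetilde{\beta}}=\frac{(d-1)(3-\eta)}{2}-\frac{1+\eta}{2\widetilde{\beta}}-\frac{(2-d)(1+\eta)}{2\widetilde{\beta}(\widetilde{\beta}-1)},
\]
matching the exponent displayed in (b) once the $O(\eta)$-corrections are absorbed into the $o(1)$-latitude built into the choices of $d$ and $\widetilde{\beta}$.

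The only genuinely delicate point is the compatibility between the dichotomy $\mu(d)\gtreqless 0$ and the freedom to push $\widetilde{\beta}$ arbitrarily close to $\beta^*$: this freedom is exactly what \eqref{e.Dset'} buys by making the constraint \eqref{e.beta-def-0} inactive, so that only \eqref{e.beta-def} binds on $\beta$ and the numerical threshold governing the two cases is the one stated in the lemma. Everything else is direct bookkeeping of a geometric series.
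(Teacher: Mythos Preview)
Your proposal is correct and follows essentially the same route as the paper's own proof: both start from \eqref{e.HD-sk-E+P0dotsPk}, use $|P_k|\leq C|Q_{k-1}|^{\widetilde{\beta}}$ together with the iterated bound $|Q_j|\geq C^{-O(1)}|Q_{k-1}|^{1/\widetilde{\beta}^{k-1-j}}$ and the geometric sum $\sum_{\ell\geq 1}\widetilde{\beta}^{-\ell}\leq 1/(\widetilde{\beta}-1)$ to reduce everything to a single power of $|Q_{k-1}|$, and then split into the two cases according to the sign of that exponent (your $\mu(d)$), trading $|Q_{k-1}|$ for $|Q_k|^{1/\widetilde{\beta}}$ in case~(b). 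Your remark on the role of \eqref{e.Dset'} in allowing $\widetilde{\beta}\to\beta^*$ is also the paper's observation, made just before the lemma.
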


\begin{proof} By \eqref{e.HD-sk-E+P0dotsPk}, our task is to control 
$$\frac{|P_k|^{d-1}|Q_k|^{(d-1)(1-\eta)/2}}{|Q_{k-1}|^{(1+\eta)/2}(|Q_{k-2}|\dots|Q_0|)^{(2-d)(1+\eta)/2}}$$
for $d-1=d_s^0-o(1)$. 

On the other hand, since $|Q_{k-1}|\leq C|Q_{k-2}|^{\widetilde{\beta}}\leq\dots\leq C^{k-1-j}|Q_j|^{\widetilde{\beta}^{k-1-j}}$, $|P_k|\leq C|Q_{k-1}|^{\widetilde{\beta}}$, and $\sum\limits_{j=0}^{k-2} \frac{1}{\widetilde{\beta}^{k-1-j}}\leq \frac{1}{\widetilde{\beta}-1}$, we see that: 
\begin{itemize}
\item if $\widetilde{\beta}$ is close to $\beta^*$ and $\beta^*\cdot d_s^0>\frac{1}{2}+\frac{1-d_s^0}{2(\beta^*-1)}$, then 
\begin{eqnarray*}
\frac{|P_k|^{d_s^0}|Q_k|^{d_s^0(1-\eta)/2}}{|Q_{k-1}|^{\frac{1+\eta}{2}}(|Q_{k-2}|\dots|Q_0|)^{\frac{(1-d_s^0)(1+\eta)}{2}}}&\leq& C^k |Q_{k-1}|^{\widetilde{\beta}d_s^0-(1+\eta)(\frac{1}{2}+\frac{1-d_s^0}{2(\widetilde{\beta}-1)})} |Q_k|^{\frac{d_s^0(1-\eta)}{2}} \\ 
&\leq& C^k |Q_k|^{(d-1)(1-\eta)/2};
\end{eqnarray*}
\item if $\widetilde{\beta}$ is close to $\beta^*$ and $\beta^*\cdot d_s^0\leq \frac{1}{2}+\frac{1-d_s^0}{2(\beta^*-1)}$, then \begin{eqnarray*}
\frac{|P_k|^{d_s^0}|Q_k|^{d_s^0(1-\eta)/2}}{|Q_{k-1}|^{(1+\eta)/2}(|Q_{k-2}|\dots|Q_0|)^{(1-d_s^0)(1+\eta)/2}}&\leq& C^k\frac{|Q_k|^{d_s^0(1-\eta)/2}}{|Q_{k-1}|^{(1+\eta)(\frac{1}{2}+\frac{1-d_s^0}{2(\widetilde{\beta}-1)}) - \widetilde{\beta}d_s^0}} \\ 
&\leq& C^k |Q_k|^{\frac{d_s^0(3-\eta)}{2}-(1+\eta)(\frac{1}{2\widetilde{\beta}} - \frac{1-d_s^0}{2\widetilde{\beta}(\widetilde{\beta}-1)})}
\end{eqnarray*}
\end{itemize}
This completes the proof of the lemma (for $d-1=d_s^0-o(1)$). 
\end{proof}

This lemma enables us to complete the proof of Theorem \ref{t.MPY-B-1}. 

\begin{proof}[Proof of Theorem \ref{t.MPY-B-1}] Take $d=1+d_s^0-o(1)$. The decomposition 
$$\mathcal{E}^+  =\bigcup\limits_{(P_0,\dots,P_k) \textrm{ admissible }} \mathcal{E}^+(P_0,\dots,P_k),$$ the fact that the number of admissible sequences $(P_0,\dots, Q_0)$ with fixed extremities $P_0$ and $Q_k$ is $\leq C|Q_k|^{-C\eta}$ (cf. page 193 of \cite{PY09}), and Lemma \ref{l.HD-sk-E+P0dotsPk} imply that  
\begin{equation}\label{e.HD-sk-E+}
m_{s_k}^d(\mathcal{E}^+)\leq  \sum\limits_{\substack{P_0 \textrm{ with } Q_0 \textrm{critical}, \\ Q_k \textrm{ critical}}}|P_0|^{d-1}|Q_k|^{e(d)-C\eta} 
\end{equation}
for all $k\in\mathbb{N}$, where 
$$e(d)=\left\{ 
\begin{array}{cr}
\frac{d-1}{2}, & \textrm{if } \beta^* \cdot d_s^0 > \frac{1}{2} + \frac{1-d_s^0}{2(\beta^*-1)} \\ 
\frac{3(d-1)}{2} -\frac{1}{2\widetilde{\beta}} - \frac{(2-d)}{2\widetilde{\beta}(\widetilde{\beta}-1)}, & \textrm{if }\beta^* \cdot d_s^0 \leq \frac{1}{2} + \frac{1-d_s^0}{2(\beta^*-1)}  
\end{array}\right.$$

By H\"older's inequality, it follows from \eqref{e.HD-sk-E+} that 
$$m_{s_k}^{d}(\mathcal{E}^+)\leq \left(\sum\limits_{P \textrm{ with } Q \textrm{critical}} |P|^{(d-1)p}\right)^{1/p} \left(\sum\limits_{Q \textrm{critical}} |Q|^{(e(d)-C\eta)q}\right)^{1/q} $$ 
for any $p,q>1$ with $\frac{1}{p}+\frac{1}{q}=1$ (and $k\in\mathbb{N}$). 

As it is explained in pages 186, 187 and 188 of \cite{PY09}, the two series above are uniformly convergent and, hence, 
\begin{equation}
m_{s_k}^{d}(\mathcal{E}^+)\leq C <\infty \quad \forall\,\, k\in\mathbb{N},
\end{equation} 
for the following choices of parameters:
\begin{equation}\label{e.dLp}
(d-1)p=\rho_s\sim d_s^0
\end{equation}
and  
\begin{equation}\label{e.dLq}
(e(d)-C\eta)q = -\frac{\sigma}{1+\tau} + \tau d_u^* + \tau\sim d_s^0+d_u^0-1
\end{equation}
where $d_u^*$, $\sigma$ and $\rho_s$ (resp.) are the quantities defined at pages 135 and 138 (resp.) of \cite{PY09}.

Since $s_k\to 0$ as $k\to\infty$, we proved that 
$$HD(\mathcal{E}^+)\leq d$$ 
for $d$ satisfying \eqref{e.dLp} and \eqref{e.dLq}. In particular, our task is reduced to prove that we can take $d<1+d_s^0$ verifying these constraints. 

Note that the value of $d$ verifying \eqref{e.dLp} and \eqref{e.dLq} is \emph{already} imposed by the extra relation $1/p+1/q=1$. 

More precisely:
\begin{itemize}
\item[(i)] if $\beta^*\cdot d_s^0 > \frac{1}{2} + \frac{1-d_s^0}{2(\beta^*-1)}$, then $e(d)=\frac{d-1}{2}$; therefore, the relations \eqref{e.dLp}, \eqref{e.dLq} and $1/p+1/q=1$ imply that 
$(d-1)$ is close to 
$$(d-1)\sim\left(\frac{1}{d_s^0} + \frac{1}{2(d_s^0+d_u^0-1)}\right)^{-1}$$
\item[(ii)] if $\beta^*\cdot d_s^0 \leq \frac{1}{2} + \frac{1-d_s^0}{2(\beta^*-1)}$, then $e(d) = \frac{3(d-1)}{2} - \frac{1}{2\widetilde{\beta}} - \frac{(2-d)}{2\widetilde{\beta}(\widetilde{\beta}-1)}$; therefore, the value of $(d-1)$ satisfying the constraints above is close to 
$$(d-1)\sim \frac{(d_s^0+d_u^0-1) + 
\frac{1}{2\beta^*}+\frac{1}{2\beta^*(\beta^*-1)}}{\frac{3}{2} + \frac{(d_s^0+d_u^0-1)}{d_s^0} + \frac{1}{2\beta^*(\beta^*-1)}}$$
(here, we are using that $\widetilde{\beta}$ is close to $\beta^*$.)
\end{itemize}

In the first case (item (i)), we always have that $d-1<d_s^0$ because 
$$\left(\frac{1}{d_s^0} + \frac{1}{2(d_s^0+d_u^0-1)}\right)^{-1} < d_s^0$$

In the second case (item (ii)), the fact that $d-1<d_s^0$ is a direct consequence of our main assumption \eqref{e.Dset} in Theorem \ref{t.MPY-A}: indeed, a simple calculation reveals that the inequality 
$$\frac{(d_s^0+d_u^0-1) + 
\frac{1}{2\beta^*}+\frac{1}{2\beta^*(\beta^*-1)}}{\frac{3}{2} + \frac{(d_s^0+d_u^0-1)}{d_s^0} + \frac{1}{2\beta^*(\beta^*-1)}} < d_s^0$$
is equivalent to $1+\frac{1-d_s^0}{(\beta^*-1)} < 3\beta^*d_s^0$. Since this inequality holds when $\beta^*(d_s^0, d_u^0)>5/3$ (i.e., our assumption \eqref{e.Dset} in Theorem \ref{t.MPY-A}), the proof of Theorem \ref{t.MPY-B-1} (and Theorem \ref{t.MPY-A}) is complete. 
\end{proof} 

\appendix

\section{Large open sets generating non-uniformly hyperbolic horseshoes \\ by C. Matheus, C. G. Moreira and J. Palis}\label{a.MMP}

In this appendix, we show that a non-uniformly hyperbolic horseshoe of an area-preserving real-analytic diffeomorphism is the maximal invariant subset of open sets of almost full Lebesgue measure. 

More concretely, in our note \cite{MMP}, we proved that the non-uniformly hyperbolic horseshoes of Palis--Yoccoz \cite{PY09} occur for many members of the standard family $\varphi_{\lambda}(x,y) = (-y+2x+\lambda\sin(2\pi x), x)$ on the two-torus $\mathbb{T}^2 = \mathbb{R}^2/\mathbb{Z}^2$. In fact, it was shown that, for all $k\in\mathbb{R}$ sufficiently large, there exists a subset $L\subset (k-\frac{4}{k^1/3}, k+\frac{4}{k^{1/3}})$ of positive Lebesgue measure such that, for all $r\in L$, the maximal invariant subset $\Lambda_r = \bigcap\limits_{n\in\mathbb{Z}} \varphi_{r}^{-n}(U_r)$ is a non-uniformly hyperbolic horseshoe for a certain choice of open set $U_r\subset\mathbb{T}^2$ with total area $\frac{25}{256}+O(\frac{1}{k^{2/3}})$. 

One of our goals here is to show that the open sets $U_r$ above (whose areas are about $9.7\%$ of the total area of the two-torus) can be replaced by open sets of almost full area.

Actually, it is not hard to see that this fact is a consequence\footnote{The maps $\varphi_{\lambda}$ are aperiodic for $\lambda>0$ because its powers are not the identity (as the origin is a hyperbolic fixed point), so that the set of periodic points must have zero Lebesgue measure (and, actually, Hausdorff dimension $\leq 1$) by real-analyticity.} of the following general statement:

\begin{theorem}\label{t.A}
Let $\varphi:M^2\to M^2$ be an aperiodic diffeomorphism of a compact surface $M^2$. Suppose that $\varphi$ possesses a non-uniformly hyperbolic horseshoe $\Lambda$. Then, for each $\varepsilon>0$, there exists an open set $W$ such that $M^2\setminus W$ has area $<\varepsilon$ and $\Lambda=\bigcap\limits_{n\in\mathbb{Z}} \varphi^{-n}(W)$. 
\end{theorem}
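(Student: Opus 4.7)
The task reduces to constructing a closed set $F \subset M^2$ with $\textrm{area}(F) < \varepsilon$, $F \cap \Lambda = \emptyset$, and such that $F$ meets every $\varphi$-orbit not entirely contained in $\Lambda$. Once such $F$ is found, $W := M^2 \setminus F$ is open, $\Lambda \subset W$, $\textrm{area}(M^2 \setminus W) < \varepsilon$, and every point $p$ of $\bigcap_{n\in\mathbb{Z}} \varphi^{-n}(W)$ has orbit disjoint from $F$, hence contained in $\Lambda$.

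To construct $F$, start from $F_0 := M^2 \setminus \widehat{R}$, which by the defining property $\Lambda = \bigcap_n \varphi^{-n}(\widehat{R})$ already meets every orbit in $M^2 \setminus \Lambda$ but may have large area. One thins $F_0$ out using the aperiodicity of $\varphi$: since $\Lambda$ has Lebesgue measure zero (because $\Lambda \subset W^s(\Lambda)$ and $HD(W^s(\Lambda)) < 2$ by Theorem \ref{t.PY09-Thm6}), the open set $M^2 \setminus \Lambda$ has full area and supports the dynamics of an aperiodic map. For any large integer $N$, the Kakutani--Halmos--Rokhlin lemma produces a Borel base $B_0 \subset M^2 \setminus \Lambda$ such that $B_0, \varphi(B_0), \ldots, \varphi^{N-1}(B_0)$ are pairwise disjoint and the tower $T := \bigsqcup_{i=0}^{N-1} \varphi^i(B_0)$ covers $M^2 \setminus \Lambda$ modulo a set of Lebesgue measure less than $\varepsilon/3$. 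By inner regularity, replace $B_0$ by a closed subset $B \subset B_0$ with $\textrm{area}(B_0 \setminus B) < \varepsilon/(3N)$; the resulting closed tower $T' := \bigsqcup_{i=0}^{N-1} \varphi^i(B)$ still covers $M^2 \setminus \Lambda$ up to an exceptional set $E$ of area less than $2\varepsilon/3$.

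Every $\varphi$-orbit in $M^2 \setminus \Lambda$ either meets $T'$, in which case it meets $B$ itself (since $\varphi^n(p) \in \varphi^i(B)$ implies $\varphi^{n-i}(p) \in B$), or is entirely contained in $E$. Cover $E$ by an open set $U$ with $\overline{U} \subset M^2 \setminus \Lambda$ and $\textrm{area}(U) < 5\varepsilon/6$; this is possible by outer regularity of Lebesgue measure combined with the compactness of $\Lambda$ and $\textrm{area}(\Lambda) = 0$, which together allow one to excise a tiny open neighborhood of $\Lambda$ without significantly enlarging the measure of the covering. Set $F := B \cup \overline{U}$. Then $F$ is closed, disjoint from $\Lambda$, satisfies $\textrm{area}(F) \leq \textrm{area}(B) + \textrm{area}(\overline{U}) < 1/N + 5\varepsilon/6 < \varepsilon$ for $N$ sufficiently large, and meets every orbit in $M^2 \setminus \Lambda$.

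The main obstacle is the application of the Kakutani--Rokhlin lemma in the stated generality: the measure-theoretic version presupposes a $\varphi$-invariant probability measure absolutely continuous with respect to area. In the intended application to area-preserving real-analytic surface diffeomorphisms this is automatic (Lebesgue is invariant), but the general aperiodic case requires either a purely topological Rokhlin-type construction based solely on the aperiodicity (in the spirit of Katok's or Lind--Thouvenot's work), or a reduction exploiting the specific geometry of the Palis--Yoccoz trapping region to handle orbits not caught by a naive measure-theoretic base. A secondary technical point is verifying that the closure $\overline{U}$ of the covering of the exceptional orbits remains disjoint from $\Lambda$, which is resolved by working at positive distance from $\Lambda$ and using $\textrm{area}(\Lambda) = 0$ to recover the lost measure.
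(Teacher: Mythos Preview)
Your overall strategy---reducing to a closed ``section'' $F$ of small area meeting every orbit off $\Lambda$---is close in spirit to the paper's, but the gap you yourself flag is precisely the content of the paper's argument. The paper's Lemma~\ref{l.A} builds, for \emph{any} aperiodic diffeomorphism, an open set $V$ with $\textrm{area}(M^2\setminus V)<\varepsilon$ and $\bigcap_{|n|\le N}\varphi^{-n}(V)=\emptyset$ via an elementary Erd\H{o}s-style averaging argument that uses Lebesgue measure only to compute areas, never as an invariant measure---so neither of your two suggested workarounds is needed. One chooses centers $x_1,\dots,x_m$ (with $m\approx(\pi\sqrt{N}\eta^2)^{-1}$) greedily so that each tower $\bigcup_{j<N}\varphi^j(B(x_i,\eta))$ absorbs at least the average fraction of what remains uncovered; a Fubini computation---based only on the symmetry $y\in\varphi^j(B(x,\eta))\Leftrightarrow x\in B(\varphi^{-j}(y),\eta)$---shows this average gain is $N\pi\eta^2$ times the remaining mass. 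Deleting the thin base $\bigcup_i\overline{B(x_i,\eta)}$ (total area $\lesssim 1/\sqrt{N}$) from the resulting union of towers yields $V$. The theorem then follows by taking $W=\widetilde U\cup V$ for a neighborhood $\widetilde U$ of $\Lambda$ small enough that $\bigcup_{|n|\le N}\varphi^{-n}(\widetilde U)$ sits inside the original trapping region $U$: any full orbit in $W$ must, in each time-window of length $2N+1$, visit $\widetilde U$ (it cannot remain in $V$ that long), hence lies entirely in $U$, hence in $\Lambda$.

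Your ``secondary technical point'' is also not resolved as written. If $E$ accumulates on $\Lambda$---and nothing in your construction excludes this---then every open $U\supset E$ has $\overline U\supset\overline E$ meeting $\Lambda$; excising a neighborhood $V_\delta(\Lambda)$ from $U$ then risks missing orbits lying entirely in $E\cap V_\delta(\Lambda)$. The fix is to invoke the trapping region $\widehat R$ you introduced but then abandoned: any orbit off $\Lambda$ exits $\widehat R$, hence exits every $V_\delta(\Lambda)\subset\widehat R$, so covering only $E\setminus V_\delta(\Lambda)$ already suffices. The paper's two-step organization (first build $V$ with no reference to $\Lambda$, then adjoin a small $\widetilde U$) sidesteps this bookkeeping altogether.
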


The proof of this result takes two steps. In Section \ref{s.1}, we construct an open set of almost full area whose maximal invariant subset is empty: more concretely, we build a high ``Kakutani--Rokhlin tower'' via an elementary probabilistic argument (\`a la Erd\"os), so that the desired open set is obtained by deleting the base from the tower. After that, in Section \ref{s.2} we ``add'' this open set of almost full area to the definition of our non-uniformly hyperbolic set: since the maximal invariant subset of this open set is empty, we end up by obtaining exactly the same non-uniformly hyperbolic horseshoe as the maximal invariant subset of an open set of almost full area. 

\begin{remark} After the first version of this appendix was ready, J. Bochi communicated to us that Theorem \ref{t.A} can also be derived (by slightly different methods) from Theorem 2 and Remark 2 in \cite{AB}. 
\end{remark}

\subsection{Large open sets with empty maximal invariant subsets}\label{s.1}

\begin{lemma}\label{l.A} Under the same assumptions of Theorem \ref{t.A}, for each $\varepsilon>0$, there exists $N\in\mathbb{N}$ and an open set $V$ such that $M^2\setminus V$ has area $<\varepsilon$ and 
$$\bigcap\limits_{|n|\leq N}\varphi^{-n}(V) = \emptyset$$
\end{lemma}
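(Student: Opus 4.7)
The statement is equivalent, via $C := M^2\setminus V$, to producing a closed set $C\subset M^2$ of area $<\varepsilon$ and an $N\in\mathbb{N}$ with $\bigcup_{|n|\le N}\varphi^{-n}(C) = M^2$; equivalently, every orbit of length $2N+1$ must intersect $C$. My plan is to build $C$ as a finite union of small closed balls whose centers are chosen by an Erd\H{o}s-style probabilistic argument, plus a small patch covering an exceptional residual set.

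First, fix $\varepsilon>0$ and an integer $N$ (the precise lower bound on $N$ will emerge below). Since $\varphi$ is aperiodic on a compact surface, the continuous function $(y,k)\mapsto d(y,\varphi^k(y))$ has a positive minimum on the compact set $M^2\times(\{-2N,\ldots,2N\}\setminus\{0\})$, say $\ge 3r_0$ with $r_0=r_0(N)>0$. Consequently, for every $y\in M^2$ the orbit points $\varphi^n(y)$, $|n|\le N$, are pairwise $3r_0$-separated, so for any $r<r_0$ the set $A_y:=\bigcup_{|n|\le N}B(\varphi^{-n}(y),r)$ is a \emph{disjoint} union of $2N+1$ Riemannian balls, each of area at least $cr^2$ for a constant $c>0$ depending only on the metric.

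Next, draw $x_1,\ldots,x_m$ i.i.d.\ uniformly in $M^2$ (w.r.t.\ the area measure) and set $B:=\bigcup_j B(x_j,r)$. The event $\{y\notin\bigcup_{|n|\le N}\varphi^n(B)\}$ is precisely the event $\{x_1,\ldots,x_m\text{ all avoid }A_y\}$, which has probability at most $\exp(-m(2N+1)cr^2/\mathrm{area}(M^2))$. Fubini then bounds the expected area of the uncovered set $U:=M^2\setminus\bigcup_{|n|\le N}\varphi^n(B)$ by $\mathrm{area}(M^2)\exp(-m(2N+1)cr^2/\mathrm{area}(M^2))$. Choose $r<r_0(N)$ and then $m$ so that $\pi mr^2<\varepsilon/4$ (bounding the total area of $B$) while $\mathbb{E}[\mathrm{area}(U)]<\varepsilon/4$; these two constraints become simultaneously feasible as soon as $N\gtrsim\varepsilon^{-1}\log(1/\varepsilon)$. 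Averaging, some deterministic choice of $x_1,\ldots,x_m$ yields $\mathrm{area}(U)<\varepsilon/4$.

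Fix such a realization. The set $U$ is closed (it is the intersection over $|n|\le N$ of the complements of the open sets $\varphi^n(B)$) and hence compact, so it admits a cover by a finite union $W$ of small open balls with $\mathrm{area}(\overline{W})<\varepsilon/2$. Set $C:=\overline{B}\cup\overline{W}$: this is closed and, since boundaries of balls have zero area, $\mathrm{area}(C)<\varepsilon$. Every $y\in M^2$ satisfies either $y\in W\subseteq C$ (take $n=0$) or $y\notin U$, in which case some $\varphi^{-n}(y)$ with $|n|\le N$ lies in $B\subseteq C$. Therefore $\bigcup_{|n|\le N}\varphi^{-n}(C)=M^2$, so $V:=M^2\setminus C$ is an open set of the required form. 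The main obstacle is the joint calibration of $(N,m,r)$: the area budget pushes $mr^2$ down while the coverage bound pushes $mNr^2$ up, and it is exactly the uniform orbit separation coming from aperiodicity that supplies the lower bound $\mathrm{area}(A_y)\gtrsim Nr^2$ making this trade-off feasible.
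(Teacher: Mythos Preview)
Your argument has a genuine gap at the very first step. You assert that aperiodicity of $\varphi$ forces the continuous function $(y,k)\mapsto d(y,\varphi^k(y))$ to have a positive minimum on $M^2\times(\{-2N,\ldots,2N\}\setminus\{0\})$, i.e., that $\varphi$ has \emph{no} periodic points of period $\le 2N$. This is false: in this paper (and in standard ergodic-theoretic usage) ``aperiodic'' means only that no iterate $\varphi^m$ is the identity, equivalently that the set of periodic points has zero area. It does not preclude individual periodic points, and indeed the non-uniformly hyperbolic horseshoe $\Lambda$ in the hypotheses contains the hyperbolic periodic points $p_s,p_u$ (and countably many others). At any such periodic point $y$ of period $k\le 2N$ one has $d(y,\varphi^k(y))=0$, so the balls $B(\varphi^{-n}(y),r)$, $|n|\le N$, are not pairwise disjoint; your lower bound $\mathrm{area}(A_y)\gtrsim Nr^2$ collapses there, and with it the probabilistic coverage estimate.

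The paper's proof confronts exactly this obstruction: it first excises a small open neighborhood $V_\delta(K_N)$ of the compact zero-measure set $K_N=\{x:\varphi^m(x)=x\text{ for some }m\le N\}$, and only on the complement does it secure the uniform orbit-separation radius $\eta>0$ that your argument needs. The covering is then built by a deterministic mean-value (greedy) iteration rather than random sampling, and $V$ is taken to be the Rokhlin tower $\bigcup_i\bigcup_{j=0}^{N-1}\varphi^j(B(x_i,\eta))$ minus its base $\bigcup_i\overline{B(x_i,\eta)}$. Your probabilistic scheme would go through essentially unchanged once you insert this excision of the low-period set and restrict the separation claim to $M^2\setminus V_\delta(K_N)$; as written, however, the proof does not stand.
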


\begin{proof} For the sake of simplicity, we restrict ourselves to the case $M^2=\mathbb{T}^2=\mathbb{R}^2/\mathbb{Z}^2$ equipped with the Lebesgue measure $\textrm{Leb}$. 

Let $\varepsilon>0$ be given and consider $N\in\mathbb{N}$ large. Since $\varphi$ is aperiodic, the compact set $K_N:=\{x\in M^2: \varphi^m(x)=x \textrm{ for some } m\leq N\}$ has zero Lebesgue measure. Thus, we can fix $\delta>0$ such that $\textrm{Leb}(V_{\delta}(K))<\varepsilon/2$.  Furthermore, given such a $\delta>0$, we can choose $\delta/2>\mu>0$ such that if $y\in M^2\setminus V_{\delta}(K)$, then $\varphi^{-j}(y)\in M^2\setminus V_{2\mu}(K)$ for each $0\leq j<N$. Finally, given $\mu>0$, we can select $\mu>\eta>0$ such that if $z\in M^2\setminus V_{\mu}(K)$, then the sets $\varphi^j(\overline{B(z,\eta)})$, $0\leq j<N$, are pairwise disjoints. 

Given $Y\subset M^2\setminus V_{\delta}(K)$, we claim that 
\begin{equation}\label{e.1}\int_{M^2\setminus V_{\mu}(K)}\textrm{Leb}\left(Y\cap\bigcup\limits_{j=0}^{N-1}\varphi^j(B(x,\eta))\right)\,dx = N\pi\eta^2\textrm{Leb}(Y) 
\end{equation} 

Indeed, note that $\textrm{Leb}\left(Y\cap\bigcup\limits_{j=0}^{N-1}\varphi^j(B(x,\eta))\right) = \sum\limits_{j=0}^{N-1} \textrm{Leb}(Y\cap \varphi^j(B(x,\eta)))$ for all $x\in M^2\setminus V_{\mu}(K)$, so that 
$$\int_{M^2\setminus V_{\mu}(K)}\textrm{Leb}\left(Y\cap\bigcup\limits_{j=0}^{N-1}\varphi^j(B(x,\eta))\right)\,dx = \sum\limits_{j=0}^{N-1}\int_{M^2\setminus V_{\mu}(K)}\int_Y \chi_{\varphi^j(B(x,\eta))}(y)\,dy\,dx$$
By Fubini's theorem, we have 
$$\int_{M^2\setminus V_{\mu}(K)}\textrm{Leb}\left(Y\cap\bigcup\limits_{j=0}^{N-1}\varphi^j(B(x,\eta))\right)\,dx = \sum\limits_{j=0}^{N-1} \int_Y \int_{M^2\setminus V_{\mu}(K)} \chi_{B(\varphi^{-j}(y),\eta)}(x) \,dx\,dy$$
Since $B(\varphi^{-j}(y),\eta)\subset M^2\setminus V_{\mu}(K)$ for $0\leq j<N$ (because $y\in Y\subset M^2\setminus V_{\delta}(K)$ and $\eta<\mu$), we get that 
\begin{eqnarray*}\int_{M^2\setminus V_{\mu}(K)}\textrm{Leb}\left(Y\cap\bigcup\limits_{j=0}^{N-1}\varphi^j(B(x,\eta))\right)\,dx &=& \sum\limits_{j=0}^{N-1} \int_Y \textrm{Leb}(B(\varphi^{-j}(y),\eta)) \,dy \\ 
&=& \sum\limits_{j=0}^{N-1} \int_Y \pi\eta^2 \,dy = N\pi\eta^2\textrm{Leb}(Y)
\end{eqnarray*} 
In other terms, we showed \eqref{e.1}. 

Next, we affirm that, for each $m\in\mathbb{N}$, there are $x_1,\dots, x_m\in M^2$ such that 
\begin{equation}\label{e.2}
\textrm{Leb}\left((M^2\setminus V_{\delta}(K))\setminus\bigcup\limits_{i=1}^m\bigcup\limits_{j=0}^{N-1}\varphi^j(B(x_i,\eta))\right)\leq (1-\pi N\eta^2)^m
\end{equation}

In fact, let us prove this fact by induction: for $m=0$, the affirmation is obvious; assuming that it holds for $m$, we employ \eqref{e.1} with 
$$Y=Y_m:=(M^2\setminus V_{\delta}(K))\setminus\bigcup\limits_{i=1}^m\bigcup\limits_{j=0}^{N-1}\varphi^j(B(x_i,\eta))$$ in order to obtain $x_{m+1}\in M^2$ such that  
$$\textrm{Leb}(Y_m\cap\bigcup\limits_{j=0}^{N-1}\varphi^j(B(x_{m+1},\eta))\geq \pi N\eta^2\textrm{Leb}(Y_m)$$
and, \emph{a fortiori}, 
\begin{eqnarray*}\textrm{Leb}\left((M^2\setminus V_{\delta}(K))\setminus\bigcup\limits_{i=1}^{m+1}\bigcup\limits_{j=0}^{N-1}\varphi^j(B(x_i,\eta))\right) &=& \textrm{Leb}(Y_m\setminus \bigcup\limits_{j=0}^{N-1}\varphi^j(B(x_{m+1},\eta))) \\ 
&\leq& (1-\pi N\eta^2)\textrm{Leb}(Y_m) \\ 
&\leq& (1-\pi N\eta^2)^{m+1},  
\end{eqnarray*}
so that the induction argument is complete. 

Finally, let us construct the open set $V$ satisfying the properties in the statement of the lemma. In this direction, we apply \eqref{e.2} with $m:=\lfloor\frac{1}{\pi \sqrt{N}\eta^2}\rfloor$ and we set  
$$V:=\bigcup\limits_{i=1}^m\bigcup\limits_{j=0}^{N-1}\varphi^j(B(x_i,\eta))\setminus \bigcup\limits_{i=1}^{m}\overline{B(x_i,\eta)}$$

Since $\textrm{Leb}(V_{\delta}(K))<\varepsilon/2$, $\textrm{Leb}(\bigcup\limits_{i=1}^m \overline{B(x_i,\eta)})\leq m\pi\eta^2\leq1/\sqrt{N}$, and, by \eqref{e.2}, $\textrm{Leb}(Y_m)\leq(1-\pi N\eta^2)^m\sim e^{-\frac{\pi N\eta^2}{\pi\sqrt{N}\eta^2}}= e^{-\sqrt{N}}$, we have that 
$$\textrm{Leb}(M^2\setminus V)\leq \frac{\varepsilon}{2}+\frac{1}{\sqrt{N}} + e^{-\sqrt{N}}<\varepsilon$$ Also, $\bigcap\limits_{j=0}^{N-1}\varphi^{j}(V)=\emptyset$ (by definition). This proves the lemma. 
\end{proof}

\subsection{Proof of Theorem \ref{t.A}}\label{s.2}

Let $U\subset M^2$ be an open set whose maximal invariant subset $\Lambda=\bigcap\limits_{n\in\mathbb{Z}}\varphi^{-n}(U)$ is a non-uniformly hyperbolic horseshoe associated to an aperiodic diffeomorphism $\varphi$. 

Given $\varepsilon>0$, consider the integer $N\in\mathbb{N}$ and the open subset $V\subset M^2$ provided by Lemma \ref{l.A}. 

Since $\Lambda$ is compact, we can select a neighborhood $\widetilde{U}$ of $\Lambda$ such that $\bigcup\limits_{n=-N}^N\varphi^{-n}(\widetilde{U})\subset U$.  

Let $W:=\widetilde{U}\cup V$. Note that $M^2\setminus W$ has area $<\varepsilon$ (because $V\subset W$). Thus, the proof of the theorem will be complete once we show that 
$$X:=\bigcap\limits_{n\in\mathbb{Z}}\varphi^{-n}(W)=\Lambda$$
Observe that $\Lambda\subset X$, so that our task is reduced to prove that $X\subset \Lambda$. For this sake, we consider $x\in X$. Since $\bigcap\limits_{|n|\leq N}\varphi^{-n}(V)=\emptyset$, there exists $|n|\leq N$ such that $\varphi^n(x)\in \widetilde{U}$, and, \emph{a fortiori}, $x\in U$. In other words, we showed that $X\subset U$. By invariance, we get the desired conclusion, namely $X\subset \bigcap\limits_{n\in\mathbb{Z}}\varphi^{-n}(\widetilde{U})=\Lambda$.

\end{document}